\documentclass[oneside,english,reqno]{amsart}
\usepackage{lmodern}
\usepackage[T1]{fontenc}
\usepackage[latin9]{inputenc}
\setcounter{tocdepth}{1}
\usepackage{babel}
\usepackage{amsthm}
\usepackage{amssymb}
\usepackage{graphicx}
\usepackage[unicode=true,pdfusetitle,
 bookmarks=true,bookmarksnumbered=false,bookmarksopen=false,
 breaklinks=false,pdfborder={0 0 1},backref=false,colorlinks=false]
 {hyperref}

\makeatletter

\providecommand{\tabularnewline}{\\}

\numberwithin{equation}{section}
\numberwithin{figure}{section}
\theoremstyle{plain}
\newtheorem{thm}{\protect\theoremname}[section]
  \theoremstyle{plain}
  \newtheorem{algorithm}[thm]{\protect\algorithmname}
  \theoremstyle{plain}
  \newtheorem{prop}[thm]{\protect\propositionname}
  \theoremstyle{remark}
  \newtheorem{rem}[thm]{\protect\remarkname}
  \theoremstyle{plain}
  \newtheorem{lem}[thm]{\protect\lemmaname}

\newcommand{\dom}{\mbox{\rm dom}}

\makeatother

  \providecommand{\algorithmname}{Algorithm}
  \providecommand{\lemmaname}{Lemma}
  \providecommand{\propositionname}{Proposition}
  \providecommand{\remarkname}{Remark}
\providecommand{\theoremname}{Theorem}

\begin{document}
\title[SHQP strategy for the BAP]{The supporting halfspace - quadratic programming strategy for the dual of \\the best approximation problem}

\subjclass[2010]{41A50, 90C25, 68Q25, 47J25}
\begin{abstract}
We consider the best approximation problem (BAP) of projecting a point
onto the intersection of a number of convex sets. It is known that
Dykstra's algorithm is alternating minimization on the dual problem.
We extend Dykstra's algorithm so that it can be enhanced by the SHQP
strategy of using quadratic programming to project onto the intersection
of supporting halfspaces generated by earlier projection operations.
By looking at a structured alternating minimization problem, we show
the convergence rate of Dykstra's algorithm when reasonable conditions
are imposed to guarantee a dual minimizer. We also establish convergence
of using a warmstart iterate for Dykstra's algorithm, show how all
the results for the Dykstra's algorithm can be carried over to the
simultaneous Dykstra's algorithm, and discuss a different way of incorporating
the SHQP strategy. Lastly, we show that the dual of the best approximation
problem can have an $O(1/k^{2})$ accelerated algorithm that also
incorporates the SHQP strategy.
\end{abstract}

\author{C.H. Jeffrey Pang}

\curraddr{Department of Mathematics\\ 
National University of Singapore\\ 
Block S17 08-11\\ 
10 Lower Kent Ridge Road\\ 
Singapore 119076 }

\email{matpchj@nus.edu.sg}

\date{\today{}}

\keywords{alternating minimization, Dykstra's algorithm, best approximation
problem.}

\maketitle
\tableofcontents{}

\section{Introduction}

We consider the following problem, known as the best approximation
problem (BAP).

\begin{eqnarray}
(BAP) & \min & f(x):=\frac{1}{2}\|x-d\|^{2}\label{eq:P-primal}\\
 & \mbox{s.t. } & x\in C:=C_{1}\cap\cdots\cap C_{m},\nonumber 
\end{eqnarray}
where $d$ is a given point and $C_{i}$, $i=1,\dots,m$, are closed
convex sets in a Hilbert space $X$. The BAP is equivalent to projecting
$d$ onto $C$. We shall assume throughout that $C\neq\emptyset$. 

We now recall some background on first order methods, alternating
minimization, and algorithms for the best approximation problem.

\subsection{First order methods and alternating minimization}

When presented with a problem with a large number of variables, first
order methods (which use gradient descent and avoid computationally
expensive operations like solving linear systems) and other methods
that decompose the large problems into smaller pieces to be solved
may be the only practical alternative. 

For these algorithms, the nonasymptotic or absolute rate of convergence
of the function values to the optimal objective value hold right from
the very first iteration of the algorithm, and are more useful than
the asymptotic rates. These rates are typically sublinear, like $O(1/k)$
for example. Classical references on first order methods include \cite{Nemirovsky_Yudin},
and newer references include \cite{Nesterov_book,JuditskyNemirovski_survey_a,JuditskyNemirovski_survey_b}. 

As explained in \cite{Nemirovsky_Yudin,Nesterov_book}, the nonasymptotic
rates of convergence of first order algorithms for smooth convex functions
is at best $O(1/k^{2})$. Nesterov proposed various $O(1/k^{2})$
nonasymptotic methods (which are thus optimal) for such problems (first
method \cite{Nesterov_1983}, second method \cite{Nesterov_1988_2nd_opt_method}
and third method \cite{Nesterov_smooth_min_2005}), and other optimal
methods were studied in \cite{Auslender_Teboulle_2006_SIOPT,BeckTeboulle2009,Lan_Lu_Monteiro_2011}.
The paper \cite{BeckTeboulle2009} also described an $O(1/k^{2})$
algorithm for solving the sum of a smooth convex function and a structured
nonsmooth function. These optimal methods are also known as accelerated
proximal gradient (APG) methods, and their design and analysis are
unified in the paper \cite{Tseng_APG_2008} (who also dealt with convex-concave
minimization). 

An optimization problem with a large number of variables can have
its variables divided into a number of blocks so that each subproblem
has fewer variables. These subproblems are solved in some order (often
in a cyclic manner) while the variables in other blocks are kept fixed.
See the formula \eqref{eq:CCM} for an elaboration. This is referred
to as alternating minimization (AM), and sometimes referred to Cyclic
Coordinate Minimization (CCM). Another alternative is to perform only
gradient descent on each block, which would reduce to what is described
as Cyclic Coordinate Descent, CCD. 

Methods like AM and CCD are quite old. If a function to be minimized
were nonsmooth, then it is possible for the AM and CCD to be stuck
at a non-optimal solution. A $O(1/k)$ rate of convergence for AM
of a two block problem was established in \cite{Beck_alt_min_SIOPT_2015}
without any assumption of strong convexity. As mentioned in \cite{Beck_alt_min_SIOPT_2015},
AM is also known in the literature as block-nonlinear Gauss\textendash{}Seidel
method or the block coordinate descent method (see for example \cite{Bertsekas_NLP_book}).
They also state the other contributions of \cite{Auslender_1976_book,Beck_Tetruashvili_2013,Bertsekas_NLP_book,Grippo_Sciandrone_1999,Luo_Tseng_1993_AOR}.

There has been much recent research on stochastic/ randomized CD.
Since we are not dealing with stochastic CD in this paper, we shall
only mention the papers \cite{Nesterov_2012_SIAM,Fercoq_Richtarik_2015_SIOPT},
and defer to the introduction and tables in \cite{Fercoq_Richtarik_2015_SIOPT}
for a summary of stochastic CD. A recent work \cite{Chambolle_Pock_2015_acc_BCD}
also identifies some cases where the deterministic CD scheme can have
an $O(1/k^{2})$ acceleration.

\subsection{\label{sub:BAP-MAP}The best approximation problem and the method
of alternating projections}

The BAP is often associated with the set intersection problem (SIP)
\begin{eqnarray*}
(SIP) & \mbox{Find } & x\in C:=C_{1}\cap\cdots\cap C_{m}.
\end{eqnarray*}
A well studied method for the SIP is the method of alternating projections
(MAP). We recall material from \cite{BauschkeCombettes11,Deutsch01_survey,Deustch01,EsRa11}
on material on the MAP. As its name suggests, the MAP projects the
iterates in a cyclic or some other manner so that the iterates converge
to a point in the intersection of these sets. 

One acceleration of the MAP for convex problems is the supporting
halfspace and quadratic programming strategy (SHQP): The projection
process generates supporting halfspaces of each $C_{i}$, and the
set $C$ is a subset of the polyhedron obtained by intersecting these
halfspaces. Projecting onto the polyhedron can accelerate the convergence
of the MAP, and may lead to superlinear convergence in small problems.
The SHQP strategy was discussed in \cite{cut_Pang12}. See Figure
\ref{fig:line-and-halfspace} for an illustration. This idea was discussed
in less generality in \cite{BausCombKruk06} and other papers. Other
methods of accelerating the MAP include \cite{GPR67,GK89,BDHP03}. 

As remarked by several authors, the MAP does not converge to the solution
of the BAP. Dykstra's algorithm \cite{Dykstra83} solves the best
approximation problem through a sequence of projections onto each
of the sets in a manner similar to the MAP, but correction vectors
are added before every projection. The proof of convergence to $P_{C}(d)$
was established in \cite{BD86} and sometimes referred to as the Boyle-Dykstra
theorem. Dykstra's algorithm was rediscovered by \cite{Han88}, who
showed that Dykstra's algorithm is equivalent to AM on the dual problem.
See also \cite{Gaffke_Mathar}. When the sets $C_{i}$ are halfspaces,
the convergence is asymptotically linear \cite{Deutsch_Hundal_rate_Dykstra}.
 A nonasymptotic $O(1/k)$ convergence rate of Dykstra's algorithm
was obtained in \cite{Chambolle_Pock_2015_acc_BCD} using the methods
similar to \cite{Beck_Tetruashvili_2013,Beck_alt_min_SIOPT_2015}
when a dual minimizer exists. (This does not diminish the significance
of the Boyle-Dykstra theorem. In our opinion, a quick glance at the
respective proofs shows that the Boyle-Dykstra theorem, which proves
the convergence to the primal optimal $P_{C}(d)$ even when a dual
minimizer does not exist, is technically more sophisticated than the
proof of the $O(1/k)$ convergence rate of Dykstra's algorithm when
a dual minimizer exists.) Dykstra's algorithm is quite old, so we
refer the reader to the commentary in \cite{Deustch01,EsRa11} for
more on previous work on Dykstra's algorithm.

In contrast to the situation for the MAP, not much has been done on
accelerating Dykstra's algorithm for the BAP. The question of how
to accelerate Dykstra's algorithm has been explicitly posed as an
open problem in \cite{Deustch01,Deutsch01_survey,EsRa11}. A method
was proposed in \cite{Lopez_Raydan_accel_Dykstra}. The only property
of Dykstra's algorithm needed for their acceleration is that Dykstra's
algorithm generates a sequence converging to $P_{C}(d)$, so there
is still be room for improving Dykstra's algorithm. See also \cite{Higham_Strabic_Anderson_accel}.

A variant of Dykstra's algorithm that is more suitable for parallel
computations is the simultaneous Dykstra's algorithm proposed in \cite{Iusem_DePierro_Han_1991}
using the product space formulation of \cite{Pierra84}.

Some specific best approximation problems can be solved with specialized
methods. The projection of a point into the intersection of halfspaces
can be solved by classical methods of quadratic programming. Other
sets for which the projection onto the intersection is easy include
the intersection of an affine space and the semidefinite cone \cite{Qi_Sun_SIMAX_2006,Malick_2004_SIMAX}.

The subgradient algorithm can be used to solve a convex constrained
optimization problem with a convergence rate of $O(1/\sqrt{k})$.
Hence the BAP can be solved at a rate of $O(1/\sqrt{k})$. See \cite{Nesterov_book}.
In \cite{Pang_Haugazeau}, we obtained a convergence rate of $O(1/k)$
in the case when the objective function is a strongly convex quadratic
function by adapting a Haugazeau's algorithm \cite{Haugazeau68} (see
also \cite{BauschkeCombettes11}), which is another known method for
solving the BAP. We note however that the rate of $O(1/k)$ in Haugazeau's
algorithm is typical, even when solving a BAP involving only two halfspaces.

\subsection{Contributions of this paper}

The main contribution of this paper is to extend Dykstra's algorithm
so that the SHQP strategy can be incorporated into Dykstra's algorithm.
(We try to reserve the use of the word ``acceleration'' to mean
an $O(1/k^{2})$ algorithm.) See Algorithm \ref{alg:Extended-Dykstra}
for our extension of Dykstra's algorithm. Recall that the Boyle-Dykstra
theorem proves the convergence of Dykstra's algorithm to the primal
solution of the BAP. We prove that the extended Dykstra's algorithm
also converges to the primal solution of the BAP (even when there
is no dual minimizer).

Next, we show that a commonly occurring regularity assumption guarantees
the existence of a dual minimizer. The existence of such a dual minimizer
would, by the results in \cite{Chambolle_Pock_2015_acc_BCD}, imply
that Dykstra's algorithm converges at a $O(1/k)$ rate. This analysis
also carries over to our extended Dykstra's algorithm.  

We point out that it is useful to use warmstart solutions for Dykstra's
algorithm and our extension. While it is recognized that Dykstra's
algorithm is the alternating minimization algorithm on the dual, it
appears that every description and proof of convergence of Dykstra's
algorithm in the literature starts with the default zero vector. See
further discussions in Subsection \ref{sub:Warmstart-Dykstra}. We
answer the natural question of whether Dykstra's algorithm and our
extension converge to the optimal primal solution with a warmstart
iterate by adapting the proof of the Boyle-Dykstra Theorem \cite{BD86}.
See Appendix \ref{sec:pf-extended-Dykstra}. 

We show how all these ideas mentioned earlier can be implemented for
the simultaneous Dykstra's Algorithm in Section \ref{sec:Simult-Dykstra-algorithm}.
We also explain another way to incorporate the SHQP strategy on the
BAP in Subsection \ref{sub:SHQP-Dykstra} that works when a minimizer
to the dual problem exists and is more natural to augment to the APG.
While this strategy is more natural than our extended Dyktra's algorithm,
we were not able to prove its global convergence using the framework
of the Boyle-Dykstra theorem.

\subsection{Notation}

Our notation is fairly standard. For a closed convex set $D$, we
let $P_{D}(\cdot)$ denote the projection onto $D$. The normal cone
of $D$ at a point $x\in D$ in the usual sense of convex analysis
is denoted by $N_{D}(x)$. We will let $\tilde{y}=(y_{1},\dots,y_{m})$.
When we discuss the extended Dykstra's algorithm in Section \ref{sec:Extended-Dykstra},
we will need $\tilde{y}=(y_{1},\dots,y_{m},y_{m+1})$, but this shouldn't
cause too much confusion.

\section{Preliminaries: Dykstra's algorithm }

In this section, we recall Dykstra's algorithm and some results. We
also give a discussion of warmstarting Dykstra's algorithm.
\begin{algorithm}
\label{alg:Dykstra}(Warmstart Dykstra's algorithm) Let $X$ be a
Hilbert space. Consider the problem of projecting a point $d\in X$
onto $C\subset X$, where $C=\cap_{i=1}^{m}C_{i}$ and $C_{i}$ are
closed convex sets. Choose starting $y_{i}^{(0)}\in X$ for all $i\in\{1,\dots,m\}$,
and let $x_{m}^{(0)}=d-(y_{1}^{(0)}+\cdots+y_{m}^{(0)})$.

01$\quad$For $k=1,2,\dots$ 

02$\quad$$\quad$$x_{0}^{(k)}=x_{m}^{(k-1)}$

03$\quad$$\quad$For $i=1,2,\dots,m$

04$\quad$$\quad$$\quad$$z_{i}^{(k)}:=x_{i-1}^{(k)}+y_{i}^{(k-1)}$

05$\quad$$\quad$$\quad$$x_{i}^{(k)}:=P_{C_{i}}(z_{i}^{(k)})$

06$\quad$$\quad$$\quad$$y_{i}^{(k)}:=z_{i}^{(k)}-x_{i}^{(k)}$

07$\quad$$\quad$End for 

08$\quad$End for 
\end{algorithm}
Let the vector $\tilde{y}\in X^{m}$ be $(y_{1},\dots,y_{m})$, where
each $y_{i}\in X$. For each closed convex set $D\subset X$, let
$\delta^{*}(\cdot,D):X\to\mathbb{R}$ be defined by $\delta^{*}(y,D)=\max_{x\in D}\langle y,x\rangle$.
(The function $\delta^{*}(\cdot,D)$ is also the conjugate of the
indicator function $\delta(\cdot,D)$, thus explaining our notation.)
Define the dual problem $(D')$ by 

\begin{eqnarray}
(D') & \inf_{y_{1},\dots,y_{m}} & h(y_{1},\dots,y_{m}):=f(y_{1}+\cdots+y_{m})+\sum_{i=1}^{m}\delta^{*}(y_{i},C_{i}),\label{eq:D-prime}
\end{eqnarray}
where $y_{i}\in X$ and the $f:X\to\mathbb{R}$ is as in \eqref{eq:P-primal}.

We review some easy results on $(D')$.
\begin{prop}
\label{prop:easy-Dykstra-facts}Let $X$ be a Hilbert space. Let
$C_{i}$ be closed convex sets in $X$ for $i\in\{1,\dots,m\}$, and
let $C=\cap_{i=1}^{m}C_{i}$. Let $d\in X$ and $\bar{x}=P_{C}(d)$.
Let $\tilde{y}=(y_{1},\dots,y_{m})$. We have the following:
\begin{enumerate}
\item $\inf_{y_{1},\dots,y_{m}}h(y_{1},\dots,y_{m})=\frac{1}{2}\|d\|^{2}-\frac{1}{2}\|d-\bar{x}\|^{2}.$
\item Let $v:X^{m}\to\mathbb{R}$ be defined by 
\begin{equation}
\begin{array}{c}
v(y_{1},\dots,y_{m})=\frac{1}{2}\|d-(y_{1}+\cdots+y_{m})-\bar{x}\|^{2}+\overset{m}{\underset{i=1}{\sum}}\delta^{*}(y_{i},C_{i}-\bar{x}).\end{array}\label{eq:defn-v}
\end{equation}
Then $v(\tilde{y})=h(\tilde{y})-\langle d,\bar{x}\rangle+\frac{1}{2}\|\bar{x}\|^{2}$,
and $\inf_{\tilde{y}}v(\tilde{y})=0$. 
\item We have $v(y_{1},\dots,y_{m})\geq\frac{1}{2}\|d-(y_{1}+\cdots+y_{m})-\bar{x}\|^{2}$. 
\item If $(y_{1},\dots,y_{m})$ is a minimizer of $v(\cdot)$ (or equivalently,
$h(\cdot)$), then $\bar{x}=d-(y_{1}+\cdots+y_{m})$. 
\item If $m=1$, then $y_{1}=d-\bar{x}$ is a minimizer of $v(\cdot)$ (or
equivalently, $h(\cdot)$).
\end{enumerate}
\end{prop}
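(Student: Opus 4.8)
The plan is to treat the five items in order of logical dependence: item (2) rests on item (1), item (4) combines items (2) and (3), and items (3) and (5) are self-contained verifications. The only substantial point is item (1), which is a strong-duality (zero-gap) statement, and I will prove it in a way that needs no constraint qualification.

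For item (1), first collapse the sum variable: writing $s=y_{1}+\cdots+y_{m}$ and $\phi(z):=\inf\{\sum_{i=1}^{m}\delta^{*}(y_{i},C_{i}):y_{1}+\cdots+y_{m}=z\}$ (the inf-convolution of the support functions $\delta^{*}(\cdot,C_{i})$), one has $\inf_{\tilde{y}}h(\tilde{y})=\inf_{z}\{\tfrac{1}{2}\|z-d\|^{2}+\phi(z)\}$, which is exactly the Moreau envelope $(q\,\square\,\phi)(d)$ with $q=\tfrac{1}{2}\|\cdot\|^{2}$. The function $\phi$ is proper convex (it is bounded below by $\langle\cdot,\bar{x}\rangle$, since $\bar{x}\in C_{i}$ forces $\delta^{*}(y_{i},C_{i})\geq\langle y_{i},\bar{x}\rangle$), so this envelope is real-valued, convex and continuous, hence equal to its own biconjugate. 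Using the always-valid identities $(q\,\square\,\phi)^{*}=q^{*}+\phi^{*}$ and $\phi^{*}=\sum_{i=1}^{m}\delta(\cdot,C_{i})=\delta(\cdot,C)$ (no qualification is needed: the conjugate of an inf-convolution is unconditionally the sum of the conjugates, and each $C_{i}$ is closed convex), I get
\[
\inf_{\tilde{y}}h(\tilde{y})=(q\,\square\,\phi)(d)=\bigl(\tfrac{1}{2}\|\cdot\|^{2}+\delta(\cdot,C)\bigr)^{*}(d)=\sup_{z\in C}\bigl\{\langle d,z\rangle-\tfrac{1}{2}\|z\|^{2}\bigr\}=\tfrac{1}{2}\|d\|^{2}-\tfrac{1}{2}\|d-\bar{x}\|^{2},
\]
the last equality because the supremand equals $\tfrac{1}{2}\|d\|^{2}-\tfrac{1}{2}\|z-d\|^{2}$ and $\bar{x}=P_{C}(d)$ minimizes $\|z-d\|$ over $C$. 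The one-sided bound ``$\geq$'' is elementary (for any $\tilde{y}$ and any $x\in C$, $\sum_{i}\delta^{*}(y_{i},C_{i})\geq\langle s,x\rangle$, and completing the square gives $h(\tilde{y})\geq\tfrac{1}{2}\|d\|^{2}-\tfrac{1}{2}\|d-x\|^{2}$); the content is the matching upper bound, and routing it through the biconjugate of the Moreau envelope is what lets me dispense with a constraint qualification and tolerate a possibly non-attained infimum.

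For item (2), the stated identity is pure algebra: expanding $\tfrac{1}{2}\|d-s-\bar{x}\|^{2}-\tfrac{1}{2}\|s-d\|^{2}$ and using $\delta^{*}(y_{i},C_{i}-\bar{x})=\delta^{*}(y_{i},C_{i})-\langle y_{i},\bar{x}\rangle$ (so $\sum_{i}\delta^{*}(y_{i},C_{i}-\bar{x})=\sum_{i}\delta^{*}(y_{i},C_{i})-\langle s,\bar{x}\rangle$) yields $v(\tilde{y})-h(\tilde{y})=-\langle d,\bar{x}\rangle+\tfrac{1}{2}\|\bar{x}\|^{2}$. Since $v$ and $h$ differ by this additive constant, $\inf_{\tilde{y}}v=\inf_{\tilde{y}}h-\langle d,\bar{x}\rangle+\tfrac{1}{2}\|\bar{x}\|^{2}$, and substituting the value from item (1) gives $\inf_{\tilde{y}}v=0$; this same additive-constant relation shows $v$ and $h$ have identical minimizers, which is the ``equivalently'' in items (4)--(5). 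Item (3) is immediate: $\bar{x}\in C\subseteq C_{i}$ gives $0\in C_{i}-\bar{x}$, so each $\delta^{*}(y_{i},C_{i}-\bar{x})\geq\langle y_{i},0\rangle=0$, and dropping the nonnegative sum in \eqref{eq:defn-v} leaves $v(\tilde{y})\geq\tfrac{1}{2}\|d-(y_{1}+\cdots+y_{m})-\bar{x}\|^{2}$.

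Items (4) and (5) then follow quickly. For item (4), a minimizer $\tilde{y}$ of $v$ has $v(\tilde{y})=0$ by item (2), so item (3) forces $\tfrac{1}{2}\|d-(y_{1}+\cdots+y_{m})-\bar{x}\|^{2}=0$, i.e. $\bar{x}=d-(y_{1}+\cdots+y_{m})$. For item (5) with $m=1$ (so $C=C_{1}$), I verify directly that $y_{1}=d-\bar{x}$ makes $v$ vanish: the quadratic term is $\tfrac{1}{2}\|d-(d-\bar{x})-\bar{x}\|^{2}=0$, while $\delta^{*}(d-\bar{x},C_{1}-\bar{x})=\max_{c\in C_{1}}\langle d-\bar{x},c-\bar{x}\rangle=0$, because the variational characterization of $\bar{x}=P_{C_{1}}(d)$ gives $\langle d-\bar{x},c-\bar{x}\rangle\leq0$ for all $c\in C_{1}$ with equality at $c=\bar{x}$. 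Hence $v(d-\bar{x})=0=\inf v$, so $y_{1}=d-\bar{x}$ is a minimizer. The whole proposition thus reduces to the strong-duality computation of item (1), and that is where I expect the only genuine difficulty, precisely because attainment of the dual can fail and must be circumvented.
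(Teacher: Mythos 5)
Your proposal is correct, and for items (2)--(5) it takes essentially the paper's route: the additive-constant identity between $v$ and $h$, the observation $0\in C_{i}-\bar{x}$ for (3)--(4), and the variational characterization of the projection for (5). The genuine divergence is item (1), which the paper does not prove at all but cites from Gaffke and Mathar; you instead give a self-contained derivation by collapsing the dual to the Moreau envelope $(q\,\square\,\phi)(d)$ of the inf-convolution $\phi$ of the support functions, and then invoking Fenchel--Moreau biconjugation together with the unconditional identities $(f\,\square\,g)^{*}=f^{*}+g^{*}$ and $\sum_{i}\delta(\cdot,C_{i})=\delta(\cdot,C)$. What your route buys is a qualification-free proof of the zero duality gap that tolerates non-attainment of the dual infimum, made internal to the paper; what the citation buys is brevity. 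One step in your write-up should be tightened, since it carries the entire weight of the hard direction: in an infinite-dimensional Hilbert space, ``real-valued and convex'' does not by itself imply continuity, and continuity (or at least lower semicontinuity) of the envelope is exactly what gives $\inf h\leq\frac{1}{2}\|d\|^{2}-\frac{1}{2}\|d-\bar{x}\|^{2}$ --- the reverse inequality being weak duality, as you note. The fix is one line: for any fixed $z_{0}\in\dom(\phi)$ one has $(q\,\square\,\phi)(x)\leq\frac{1}{2}\|x-z_{0}\|^{2}+\phi(z_{0})$, so the envelope is a convex function that is locally bounded above, hence locally Lipschitz, hence lower semicontinuous, and Fenchel--Moreau applies. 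With that sentence added, your argument is complete.
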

\begin{proof}
Statement (1) can be obtained from \cite[pages 32--33]{Gaffke_Mathar}.
For Statement (2), note that
\[
\begin{array}{rcl}
v(\tilde{y}) & = & \frac{1}{2}\|d-(y_{1}+\cdots+y_{m})-\bar{x}\|^{2}+\overset{m}{\underset{i=1}{\sum}}\delta^{*}(y_{i},C_{i}-\bar{x})\\
 & = & \frac{1}{2}\|d-(y_{1}+\cdots+y_{m})\|^{2}-\langle d-(y_{1}+\cdots+y_{m}),\bar{x}\rangle+\frac{1}{2}\|\bar{x}\|^{2}\\
 &  & +\left[\overset{m}{\underset{i=1}{\sum}}\delta^{*}(y_{i},C_{i})\right]-\langle y_{1}+\cdots+y_{m},\bar{x}\rangle\\
 & = & \frac{1}{2}\|d-(y_{1}+\cdots+y_{m})\|^{2}-\langle d,\bar{x}\rangle+\frac{1}{2}\|\bar{x}\|^{2}+\left[\overset{m}{\underset{i=1}{\sum}}\delta^{*}(y_{i},C_{i})\right]\\
 & = & h(\tilde{y})-\langle d,\bar{x}\rangle+\frac{1}{2}\|\bar{x}\|^{2}.
\end{array}
\]
The rest of Statement (2) is elementary. Statements (3) and (4) follow
easily from the fact $0\in C_{i}-\bar{x}$, which gives $\delta^{*}(y_{i},C_{i}-\bar{x})\geq\langle y_{i},0\rangle=0$.
Statement (5) is easy.
\end{proof}
As explained in \cite{Han88,Gaffke_Mathar} and perhaps other sources,
alternating minimization in the order 

\begin{eqnarray}
y_{1}^{(k)} & = & \arg\min_{y}h(y,y_{2}^{(k-1)},y_{3}^{(k-1)},\dots,y_{m}^{(k-1)})\label{eq:CCM}\\
y_{2}^{(k)} & = & \arg\min_{y}h(y_{1}^{(k)},y,y_{3}^{(k-1)},\dots,y_{m}^{(k-1)})\nonumber \\
 & \vdots\nonumber \\
y_{m}^{(k)} & = & \arg\min_{y}h(y_{1}^{(k)},y_{2}^{(k)},\dots,y_{m-1}^{(k)},y),\nonumber 
\end{eqnarray}
leads to the Dykstra's algorithm as presented in Algorithm \ref{alg:Dykstra}
through Proposition \ref{prop:easy-Dykstra-facts}(5). We also
have the following easily verifiable facts:

\begin{eqnarray}
x_{i}^{(k)} & = & d-y_{1}^{(k)}-\cdots-y_{i-1}^{(k)}-y_{i}^{(k)}-y_{i+1}^{(k-1)}-\cdots-y_{m}^{(k-1)}\label{eq:about-x}\\
\mbox{ and }z_{i}^{(k)} & = & d-y_{1}^{(k)}-\cdots-y_{i-1}^{(k)}-y_{i+1}^{(k-1)}-\cdots-y_{m}^{(k-1)}\label{eq:about-z}
\end{eqnarray}

\subsection{\label{sub:Warmstart-Dykstra}Warmstart Dykstra's algorithm}

It appears that all descriptions and proofs of convergence of Dykstra's
algorithm use the default starting point $y_{i}^{(0)}=0$ for all
$i\in\{1,\dots,m\}$. We saw earlier that Dykstra's algorithm is alternating
minimization on the dual problem with starting point $\tilde{y}^{(0)}$.
In particular, the iterates $\tilde{y}^{(k)}$ are such that $\{h(\tilde{y}^{(k)})\}_{k}$
is a non-increasing sequence of real numbers to the dual objective
value. One may then choose a starting point $\tilde{y}^{(0)}$ such
that $h(\tilde{y}^{(0)})$ is closer to the dual objective value than
the default starting point of all zeros. There are several ways to
obtain a different starting point.
\begin{enumerate}
\item One can use greedy algorithms (that may not guarantee global convergence
to the optimal solution) to decrease the dual objective values. A
plausible strategy is to use the greedy algorithms till they do not
appear to achieve good decrease in the value $h(\cdot)$, then switch
to the warmstart Dykstra's algorithm, or our extended algorithm in
Algorithm \ref{alg:Extended-Dykstra}, to guarantee convergence to
the optimal primal solution. 
\item A warmstart solution may be available after solving a nearby problem.
For example, one might want to resolve a problem after a set has been
added or removed, or after a perturbation of parameters. Alternatively,
there may be a nearby structured problem that can be solved approximately
with less effort than the original problem. 
\end{enumerate}
The proof of convergence of Dykstra's algorithm with a different starting
point is not too different from the Boyle-Dykstra theorem. We defer
the proof to Appendix \ref{sec:pf-extended-Dykstra}, where we also
prove the convergence of our extended Dykstra's algorithm to be introduced
in Section \ref{sec:Extended-Dykstra}.

\section{\label{sec:Extended-Dykstra}Extended Dykstra's algorithm}

As mentioned in Subsection \ref{sub:BAP-MAP}, the SHQP strategy (of
collecting halfspaces containing $C$ generated by earlier projections
and then projecting onto the intersection of the halfspaces by QP)
can enhance the convergence of the method of alternating projections
for the set intersection problem. In this section, we present our
extension of Dykstra's algorithm in Algorithm \ref{alg:Extended-Dykstra}
and how it can incorporate the SHQP strategy. In order to extend the
proof of the Boyle-Dykstra theorem to establish the primal convergence
of our extended Dykstra's algorithm, we need Theorem \ref{thm:ppty-extended-Dykstra}(2).
The proof of Theorem \ref{thm:ppty-extended-Dykstra}(2) illustrates
why lines 8 and 12 of Algorithm \ref{alg:Extended-Dykstra} were designed
as such. The other parts of the Boyle-Dykstra theorem follow with
little modifications, so we defer the rest of the convergence proof
to Appendix \ref{sec:pf-extended-Dykstra}. We now present our extended
Dykstra's algorithm.
\begin{algorithm}
\label{alg:Extended-Dykstra}(Extended Dykstra's algorithm) Consider
the BAP \eqref{eq:P-primal}. Let $y_{i}^{(0)}\in X$ be the starting
dual variables for each component $i\in\{1,\dots,m\}$. We also introduce
a variable $y_{m+1}^{(k)}\in X$, with starting value $y_{m+1}^{(0)}$
being $0$, in our calculations. Let $H_{m+1}^{0}=X$. Set $x_{m+1}^{(0)}=d-\sum_{i=1}^{m+1}y_{i}^{(0)}$. 

01$\quad$For $k=1,2,\dots$ 

02$\quad$$\quad$$x_{0}^{(k)}=x_{m+1}^{(k-1)}$

03$\quad$$\quad$For $i=1,2,\dots,m$

04$\quad$$\quad$$\quad$$z_{i}^{(k)}:=x_{i-1}^{(k)}+y_{i}^{(k-1)}$

05$\quad$$\quad$$\quad$$x_{i}^{(k)}:=P_{C_{i}}(z_{i}^{(k)})$

06$\quad$$\quad$$\quad$$y_{i}^{(k)}:=z_{i}^{(k)}-x_{i}^{(k)}$

07$\quad$$\quad$End for 

08$\quad$$\quad$Let $C_{m+1}^{k}\subset X$ be such that $C\subset C_{m+1}^{k}\subset H_{m+1}^{k-1}$. 

09$\quad$$\quad$$z_{m+1}^{(k)}:=x_{m}^{(k)}+y_{m+1}^{(k-1)}$

10$\quad$$\quad$$x_{m+1}^{(k)}=P_{C_{m+1}^{k}}(z_{m+1}^{(k)})$

11$\quad$$\quad$$y_{m+1}^{(k)}=z_{m+1}^{(k)}-x_{m+1}^{(k)}$

12$\quad$$\quad$Let $H_{m+1}^{k}$ be the halfspace with normal
$y_{m+1}^{(k)}$ passing through $x_{m+1}^{(k)}$, i.e., 
\[
H_{m+1}^{k}=\{x:\langle y_{m+1}^{(k)},x-x_{m+1}^{(k)}\rangle\leq0\}.
\]

13$\quad$End for \end{algorithm}
\begin{rem}
(Designing $C_{m+1}^{k}$)  In line 8 of Algorithm \ref{alg:Extended-Dykstra},
the set $C_{m+1}^{k}$ can be chosen to be the intersection of $H_{m+1}^{k-1}$
and the halfspaces generated through earlier projections. The projection
$P_{C_{m+1}^{k}}(\cdot)$ can then be calculated easily using methods
of quadratic programming if the number of halfspaces defining $C_{m+1}^{k}$
is small. It is clear to see that Algorithm \ref{alg:Extended-Dykstra}
reduces to the original Dykstra's algorithm if we had kept $H_{m+1}^{k}=C_{m+1}^{k}=X$
for all $k\in\{1,2,\dots\}$. The choice of storing halfspaces for
$H_{m+1}^{k}$ in line 12 simplifies computations involved.
\begin{rem}
(Positioning sets of type $C_{m+1}^{k}$) If the number $m$ is large,
then one can introduce more than just one additional set of the type
$C_{m+1}^{k}$ at the end of all the original sets in an implementation
of Algorithm \ref{alg:Extended-Dykstra}. For example, one can introduce
the additional set after every fixed number of original sets so that
the quadratic programs formed will have a manageable number of halfspaces. 
\end{rem}
\end{rem}
Theorem \ref{thm:ppty-extended-Dykstra} below will be crucial in
proving that the iterates $\{x_{i}^{(k)}\}$ of Algorithm \ref{alg:Extended-Dykstra}
converges to the optimal primal solution. The proof of the Theorem
\ref{thm:ppty-extended-Dykstra} explains how the sets $C_{m+1}^{k}$
and $H_{m+1}^{k}$ were designed in order to maintain the conclusion
in Theorem \ref{thm:ppty-extended-Dykstra}(2). 
\begin{thm}
\label{thm:ppty-extended-Dykstra}(Properties of Algorithm \ref{alg:Extended-Dykstra})
In Algorithm \ref{alg:Extended-Dykstra}, define the dual function
$h^{k}:X^{m+1}\to\mathbb{R}$ at the $k$th iteration and $\bar{h}:X^{m+1}\to\mathbb{R}$
by
\begin{equation}
\begin{array}{rcl}
h^{k}(\tilde{y}) & = & \frac{1}{2}\|d-(y_{1}+\cdots+y_{m+1})\|^{2}+\bigg[\underset{i=1}{\overset{m}{\sum}}\delta^{*}(y_{i},C_{i})\bigg]+\delta^{*}(y_{m+1},H_{m+1}^{k})\\
\bar{h}(\tilde{y}) & = & \frac{1}{2}\|d-(y_{1}+\cdots+y_{m+1})\|^{2}+\bigg[\underset{i=1}{\overset{m}{\sum}}\delta^{*}(y_{i},C_{i})\bigg]+\delta^{*}(y_{m+1},C).
\end{array}\label{eq:hk}
\end{equation}
Let $\tilde{y}^{(k)}=(y_{1}^{(k)},\dots,y_{m}^{(k)},y_{m+1}^{(k)})$.
The following hold:
\begin{enumerate}
\item \textup{$h^{k-1}(\tilde{y}^{(k-1)})\geq h^{k}(\tilde{y}^{(k)})+\frac{1}{2}\sum_{i=1}^{m+1}\|y_{i}^{(k)}-y_{i}^{(k-1)}\|^{2}.$}
\item The sum $\sum_{j=1}^{\infty}\sum_{i=1}^{m+1}\|y_{i}^{(j)}-y_{i}^{(j-1)}\|^{2}$
is finite.
\end{enumerate}
\end{thm}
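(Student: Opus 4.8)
The plan is to read Algorithm \ref{alg:Extended-Dykstra} as alternating minimization of $h^{k}$ over the blocks $y_{1},\dots,y_{m+1}$ and to extract a per-block decrease from strong convexity. The only term coupling the blocks is $\frac{1}{2}\|d-(y_{1}+\cdots+y_{m+1})\|^{2}$, so fixing all but the $i$-th block leaves a function of the form $\phi(y)=\frac{1}{2}\|w-y\|^{2}+\delta^{*}(y,D)+\text{const}$ with $w=d-\sum_{j\neq i}y_{j}$; this is $1$-strongly convex, hence its minimizer $y^{*}$ obeys $\phi(y)\ge\phi(y^{*})+\frac{1}{2}\|y-y^{*}\|^{2}$. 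First I would record the bookkeeping identities $x_{i}^{(k)}=d-\sum_{j\le i}y_{j}^{(k)}-\sum_{j>i}y_{j}^{(k-1)}$, so that $x_{m+1}^{(k)}=d-\sum_{j=1}^{m+1}y_{j}^{(k)}$ and $z_{m+1}^{(k)}=d-\sum_{j=1}^{m}y_{j}^{(k)}$; these identify each projection step in lines 4--6 and 9--11 with exactly the block minimizer above, just as Proposition \ref{prop:easy-Dykstra-facts}(5) does for a single set.

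For Part (1) I would telescope along the intermediate iterates $(y_{1}^{(k)},\dots,y_{i}^{(k)},y_{i+1}^{(k-1)},\dots,y_{m+1}^{(k-1)})$. For the first $m$ blocks the last term $\delta^{*}(y_{m+1},\cdot)$ is a constant (since $y_{m+1}$ is held at $y_{m+1}^{(k-1)}$), so the strong-convexity inequality applied to each block $i\le m$ sums to $h^{k-1}(\tilde{y}^{(k-1)})\ge h^{k-1}(y_{1}^{(k)},\dots,y_{m}^{(k)},y_{m+1}^{(k-1)})+\frac{1}{2}\sum_{i=1}^{m}\|y_{i}^{(k)}-y_{i}^{(k-1)}\|^{2}$. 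The delicate block is $i=m+1$, where the objective changes from $h^{k-1}$ (built on $H_{m+1}^{k-1}$) to $h^{k}$ (built on $H_{m+1}^{k}$), yet the actual update minimizes $\psi(y):=\frac{1}{2}\|z_{m+1}^{(k)}-y\|^{2}+\delta^{*}(y,C_{m+1}^{k})$, i.e.\ uses a \emph{third} set $C_{m+1}^{k}$. This is the main obstacle, and it is exactly what lines 8 and 12 are designed to resolve.

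The reconciliation rests on two observations. Since $x_{m+1}^{(k)}=P_{C_{m+1}^{k}}(z_{m+1}^{(k)})$, the residual $y_{m+1}^{(k)}$ lies in $N_{C_{m+1}^{k}}(x_{m+1}^{(k)})$, so $\delta^{*}(y_{m+1}^{(k)},C_{m+1}^{k})$ is attained at $x_{m+1}^{(k)}$; the halfspace $H_{m+1}^{k}$ of line 12 has the same normal $y_{m+1}^{(k)}$ and passes through $x_{m+1}^{(k)}$, yielding the key identity $\delta^{*}(y_{m+1}^{(k)},H_{m+1}^{k})=\langle y_{m+1}^{(k)},x_{m+1}^{(k)}\rangle=\delta^{*}(y_{m+1}^{(k)},C_{m+1}^{k})$. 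Second, the inclusion $C_{m+1}^{k}\subset H_{m+1}^{k-1}$ from line 8 gives $\delta^{*}(y_{m+1}^{(k-1)},C_{m+1}^{k})\le\delta^{*}(y_{m+1}^{(k-1)},H_{m+1}^{k-1})$ by monotonicity of the support function. Applying strong convexity of $\psi$ (minimizer $y_{m+1}^{(k)}$, comparison point $y_{m+1}^{(k-1)}$) and substituting these two relations converts the $\psi$-decrease into $h^{k-1}(y_{1}^{(k)},\dots,y_{m}^{(k)},y_{m+1}^{(k-1)})\ge h^{k}(\tilde{y}^{(k)})+\frac{1}{2}\|y_{m+1}^{(k)}-y_{m+1}^{(k-1)}\|^{2}$. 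Chaining this with the first $m$ blocks gives Part (1).

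For Part (2) I would telescope Part (1): summing over $k$ gives $\frac{1}{2}\sum_{k=1}^{N}\sum_{i=1}^{m+1}\|y_{i}^{(k)}-y_{i}^{(k-1)}\|^{2}\le h^{0}(\tilde{y}^{(0)})-h^{N}(\tilde{y}^{(N)})$, with $h^{0}(\tilde{y}^{(0)})$ finite (for instance $\frac{1}{2}\|d\|^{2}$ for the all-zero start, since $H_{m+1}^{0}=X$ and $y_{m+1}^{(0)}=0$). It then suffices to bound $h^{N}(\tilde{y}^{(N)})$ below uniformly in $N$. Because $C\subset C_{m+1}^{N}\subset H_{m+1}^{N}$, monotonicity gives $\delta^{*}(\cdot,H_{m+1}^{N})\ge\delta^{*}(\cdot,C)$ and hence $h^{N}\ge\bar{h}$ pointwise. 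Finally $\bar{h}$ is precisely the dual objective of Proposition \ref{prop:easy-Dykstra-facts} for projecting $d$ onto $C_{1}\cap\cdots\cap C_{m}\cap C=C$, so $\inf\bar{h}=\frac{1}{2}\|d\|^{2}-\frac{1}{2}\|d-\bar{x}\|^{2}>-\infty$ by Proposition \ref{prop:easy-Dykstra-facts}(1). Thus $h^{N}(\tilde{y}^{(N)})\ge\inf\bar{h}$ is bounded below independently of $N$, the partial sums are uniformly bounded, and letting $N\to\infty$ gives the asserted finiteness.
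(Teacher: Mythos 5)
Your proof is correct and follows essentially the same route as the paper: the per-block strong-convexity decrease, with the $(m+1)$-st block reconciled via the two facts that $C_{m+1}^{k}\subset H_{m+1}^{k-1}$ (monotonicity of support functions) and $\delta^{*}(y_{m+1}^{(k)},C_{m+1}^{k})=\delta^{*}(y_{m+1}^{(k)},H_{m+1}^{k})$ (the design of line 12), is exactly the paper's chain \eqref{eq:h-ineq-chain}--\eqref{eq:next-h-ineq}. Your Part (2) argument --- telescoping and bounding $h^{N}(\tilde{y}^{(N)})\geq\bar{h}(\tilde{y}^{(N)})\geq\inf\bar{h}>-\infty$ via Proposition \ref{prop:easy-Dykstra-facts}(1) --- is likewise the paper's argument.
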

\begin{proof}
We have the following chain of inequalities: 
\begin{equation}
\begin{array}{cl}
 & \frac{1}{2}\|d-(y_{1}^{(k)}+\cdots+y_{m}^{(k)})-y_{m+1}^{(k-1)}\|^{2}+\delta^{*}(y_{m+1}^{(k-1)},H_{m+1}^{k-1})\\
\geq & \frac{1}{2}\|d-(y_{1}^{(k)}+\cdots+y_{m}^{(k)})-y_{m+1}^{(k-1)}\|^{2}+\delta^{*}(y_{m+1}^{(k-1)},C_{m+1}^{k})\\
\geq & \frac{1}{2}\|d-(y_{1}^{(k)}+\cdots+y_{m}^{(k)})-y_{m+1}^{(k)}\|^{2}+\delta^{*}(y_{m+1}^{(k)},C_{m+1}^{k})+\frac{1}{2}\|y_{m+1}^{(k)}-y_{m+1}^{(k-1)}\|^{2}\\
= & \frac{1}{2}\|d-(y_{1}^{(k)}+\cdots+y_{m}^{(k)})-y_{m+1}^{(k)}\|^{2}+\delta^{*}(y_{m+1}^{(k)},H_{m+1}^{k})+\frac{1}{2}\|y_{m+1}^{(k)}-y_{m+1}^{(k-1)}\|^{2}.
\end{array}\label{eq:h-ineq-chain}
\end{equation}
The first inequality comes from the fact that $C_{m+1}^{k}\subset H_{m+1}^{k-1}$,
which implies that $\delta^{*}(\cdot,H_{m+1}^{k-1})\geq\delta^{*}(\cdot,C_{m+1}^{k})$.
The second inequality comes from the fact that $y_{m+1}^{(k)}$ is
the minimizer of the strongly convex function with modulus $1$ defined
by 
\[
\begin{array}{c}
y\mapsto\frac{1}{2}\|d-(y_{1}^{(k)}+\cdots+y_{m}^{(k)})-y\|^{2}+\delta^{*}(y,C_{m+1}^{k}).\end{array}
\]
The final equation follows readily from the definition of $H_{m+1}^{k}$.
We can apply the same principle in \eqref{eq:h-ineq-chain} to show
that for all $i\in\{1,\dots,m\}$, we have 
\begin{equation}
\begin{array}{cl}
 & \frac{1}{2}\|d-y_{1}^{(k)}-\cdots y_{i-1}^{(k)}-y_{i}^{(k-1)}-y_{i+1}^{(k-1)}-\cdots-y_{m+1}^{(k-1)}\|^{2}+\delta^{*}(y_{i}^{(k-1)},C_{i})\\
\geq & \frac{1}{2}\|d-y_{1}^{(k)}-\cdots y_{i-1}^{(k)}-y_{i}^{(k)}-y_{i+1}^{(k-1)}-\cdots-y_{m+1}^{(k-1)}\|^{2}\\
 & +\delta^{*}(y_{i}^{(k)},C_{i})+\frac{1}{2}\|y_{i}^{(k)}-y_{i}^{(k-1)}\|^{2}.
\end{array}\label{eq:next-h-ineq}
\end{equation}
Combining \eqref{eq:h-ineq-chain} and \eqref{eq:next-h-ineq} gives
(1). 

From the fact that $H_{m+1}^{k}\supset C$, we have $\delta^{*}(\cdot,H_{m+1}^{k})\geq\delta^{*}(\cdot,C)$,
which in turn implies that $h^{k}(y)\geq\bar{h}(y)$. Moreover, for
each $k$, we make use of the observation in Proposition \ref{prop:easy-Dykstra-facts}(1)
to get 
\[
\inf_{y\in X^{m+1}}h^{k}(y)=\min_{y\in X^{m+1}}\bar{h}(y).
\]
Hence 
\[
\begin{array}{rcl}
\frac{1}{2}\underset{j=1}{\overset{k}{\sum}}\underset{i=1}{\overset{m+1}{\sum}}\|y_{i}^{(j)}-y_{i}^{(j-1)}\|^{2} & \leq & h^{0}(\tilde{y}^{(0)})-h^{k}(\tilde{y}^{(k)})\\
 & \leq & h^{0}(\tilde{y}^{(0)})-\bar{h}(\tilde{y}^{(k)})\\
 & \leq & h^{0}(\tilde{y}^{(0)})-\min_{y}\bar{h}(y).
\end{array}
\]
Thus (2) follows.
\end{proof}
The rest of the proof of the primal convergence of Algorithm \ref{alg:Extended-Dykstra}
is not too different from the Boyle-Dykstra theorem, so we will prove
the convergence result in Appendix \ref{sec:pf-extended-Dykstra}.

\section{\label{sec:alt-min-strategy}Convergence rate of alternating minimization
and Dykstra's algorithm}

In this section, we first recall the proof of the $O(1/k)$ convergence
rate of alternating minimization under the assumption of strong convexity
of subproblems and bounded level sets. This will then give us the
convergence rate of the function $h(\cdot)$ in the dual of Dykstra's
algorithm. We also discuss how this analysis can be carried over to
our extended Dykstra's algorithm. In Subsection \ref{sub:SHQP-Dykstra},
we introduce another more natural way to incorporate the SHQP heuristic
into Dykstra's algorithm and attains the nonasymptotic $O(1/k)$ convergence
rate when there is a dual minimizer. But we note that we are unable
to prove the global convergence to the primal optimal solution for
this new strategy.

\subsection{\label{sub:General-convergence-rate}General convergence rate result
on alternating minimization}

In this subsection, we recall that under certain conditions, alternating
minimization has a nonasymptotic convergence rate of $O(1/k)$. We
need the following result proved in \cite{Beck_Tetruashvili_2013}
and \cite{Beck_alt_min_SIOPT_2015}.
\begin{lem}
\label{lem:seq-conv-rate}(Sequence convergence rate) Let $\alpha>0$.
Suppose the sequence of nonnegative numbers $\{a_{k}\}_{k=0}^{\infty}$
is such that 
\[
a_{k}\geq a_{k+1}+\alpha a_{k+1}^{2}\mbox{ for all }k\in\{1,2,\dots\}.
\]
\end{lem}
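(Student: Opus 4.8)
The plan is to prove the stated $O(1/k)$ decay; with the given normalization I expect the conclusion to read $a_k \le \max\{2/\alpha,\,a_1\}/k$ for $k \ge 1$. First I would record two elementary reductions. Since the hypothesis gives $a_k \ge a_{k+1} + \alpha a_{k+1}^2 \ge a_{k+1}$, the sequence is nonincreasing from index $1$ onward. Moreover, if $a_{k_0}=0$ for some $k_0 \ge 1$, then $0 \ge a_{k_0+1} + \alpha a_{k_0+1}^2$ forces $a_{k_0+1}=0$, so the sequence is identically zero thereafter and the bound holds trivially. Hence I may assume $a_k>0$ for all $k \ge 1$ and pass to the reciprocals $b_k := 1/a_k$, the goal being to show that $b_k$ grows at least linearly.

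The core estimate comes from dividing the hypothesis by $a_k a_{k+1} > 0$:
\[
b_{k+1}-b_k=\frac{a_k-a_{k+1}}{a_k a_{k+1}}\ge\frac{\alpha a_{k+1}^2}{a_k a_{k+1}}=\alpha\,\frac{a_{k+1}}{a_k}.
\]
Here lies the main obstacle: the ratio $a_{k+1}/a_k$ is at most $1$ and has no a priori positive lower bound, so naive telescoping of $b_{k+1}-b_k$ does not deliver a uniform per-step increment. I would resolve this with a dichotomy on the size of the step. If $a_{k+1}\ge \tfrac12 a_k$ (a ``slow'' step), the display above gives $b_{k+1}-b_k\ge \alpha/2$. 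If instead $a_{k+1}<\tfrac12 a_k$ (a ``fast'' step), then $b_{k+1}>2b_k$, whence $b_{k+1}-b_k> b_k\ge b_1=1/a_1$, using that $b_k$ is nondecreasing. In either case each increment is bounded below by $c:=\min\{\alpha/2,\,1/a_1\}$.

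It then remains to telescope. Since $b_1=1/a_1\ge c$ and $b_{k+1}\ge b_k+c$, a one-line induction yields $b_k\ge kc$ for every $k\ge 1$, i.e.
\[
a_k\le\frac{1}{kc}=\frac{\max\{2/\alpha,\,a_1\}}{k}.
\]
I expect the only genuinely delicate point to be the slow/fast dichotomy that converts the unbounded ratio into a fixed additive increment; the reductions and the closing telescoping are routine. (If the intended statement anchors the bound at $a_0$ rather than $a_1$, the same argument applies verbatim, provided the recursion is also assumed to hold at $k=0$.)
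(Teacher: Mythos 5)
Your argument is internally correct: the monotonicity and zero-absorption reductions, the reciprocal estimate $b_{k+1}-b_k\ge\alpha\,a_{k+1}/a_k$, the slow/fast dichotomy, and the closing telescoping all check out, and they do yield $a_k\le\max\{2/\alpha,\,a_1\}/k$ for $k\ge1$. Be aware that the paper itself offers no proof of this lemma: it quotes the two conclusions directly from \cite[Lemma 6.2]{Beck_Tetruashvili_2013} and \cite[Lemma 3.8]{Beck_alt_min_SIOPT_2015} (the lemma environment in the source closes before the enumerate listing the conclusions, which is why you saw only the hypothesis). Your dichotomy between halving steps and reciprocal-increment steps is in fact the same core idea behind the cited Lemma 3.8, so in spirit your argument matches the literature the paper leans on.

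However, the conclusion you prove is strictly weaker than what the paper asserts and later uses. The paper's part (2) reads $a_k\le\max\{(1/2)^{(k-1)/2}a_0,\ 4/(\alpha(k-1))\}$ for $k\ge2$: the dependence on the initial value decays geometrically, so after logarithmically many steps the bound is $4/(\alpha(k-1))$, independent of where the sequence started. Your bound instead carries the permanent prefactor $\max\{2/\alpha,\,a_1\}$, i.e., a linear dependence on $a_1$ at every $k$ --- exactly the dependence the paper highlights that part (2) removes (``The second formula refines the first by reducing the dependence of $a_k$ on the first few terms''), and which is needed for the second displayed bound in Theorem \ref{thm:CCM-conv-rate}. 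The repair is small: rather than converting a fast step ($a_{k+1}<a_k/2$) into a uniform additive increment via $b_{k+1}-b_k>b_k\ge b_1$, count the step types among steps $1,\dots,k-1$. If at least half are fast, then monotonicity plus repeated halving gives $a_k\le(1/2)^{(k-1)/2}a_1$; otherwise at least half are slow, and telescoping the increment $\alpha/2$ over those steps alone gives $b_k\ge\alpha(k-1)/4$, i.e., $a_k\le 4/(\alpha(k-1))$. The maximum of the two cases is precisely part (2) (anchored at $a_1$ rather than $a_0$; as you note, the $a_0$ anchoring presumes the recursion also holds at $k=0$). Part (1), with its constant $1.5/(\alpha k)$ under the extra hypotheses on $a_1,a_2$, requires a separate induction, though your $2/(\alpha k)$ is of the same order.
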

\begin{enumerate}
\item \cite[Lemma 6.2]{Beck_Tetruashvili_2013} If furthermore, $\begin{array}{c}
a_{1}\leq\frac{1.5}{\alpha}\mbox{ and }a_{2}\leq\frac{1.5}{2\alpha}\end{array}$, then 
\[
\begin{array}{c}
a_{k}\leq\frac{1.5}{\alpha k}\mbox{ for all }k\in\{1,2,\dots\}.\end{array}
\]

\item \cite[Lemma 3.8]{Beck_alt_min_SIOPT_2015} For any $k\geq2$, 
\[
\begin{array}{c}
a_{k}\leq\max\left\{ \left(\frac{1}{2}\right)^{(k-1)/2}a_{0},\frac{4}{\alpha(k-1)}\right\} .\end{array}
\]
In addition, for any $\epsilon>0$, if 
\[
\begin{array}{c}
\begin{array}{c}
k\geq\max\left\{ \frac{2}{\ln(2)}[\ln(a_{0})+\ln(1/\epsilon)],\frac{4}{\alpha\epsilon}\right\} +1,\end{array}\end{array}
\]
then $a_{n}\leq\epsilon$. 
\end{enumerate}
The second formula refines the first by reducing the dependence of
$a_{k}$ on the first few terms of $\{a_{i}\}_{i}$. 

We now prove our general convergence rate result for alternating minimization.
The following result was discussed in \cite{Chambolle_Pock_2015_acc_BCD}
and its ideas appeared in \cite{Beck_Tetruashvili_2013,Beck_alt_min_SIOPT_2015}.

\begin{thm}
\label{thm:CCM-conv-rate}($O(1/k)$ Convergence rate of alternating
minimization) Let $f:X^{m}\to\mathbb{R}$ be a smooth convex function,
and $g_{i}:X\to\mathbb{R}$ be (not necessarily smooth) convex functions
for $i\in\{1,\dots,m\}$, Define $h:X^{m}\to\mathbb{R}$ by 
\[
\begin{array}{c}
h(y_{1},y_{2},\dots,y_{m})=f(y_{1},y_{2},\dots,y_{m})+\underset{i=1}{\overset{m}{\sum}}g_{i}(y_{i}).\end{array}
\]
 such that 
\begin{enumerate}
\item The gradient $f':X^{m}\to X^{m}$ is Lipschitz continuous with modulus
$L$, and 
\item There is a number $\mu>0$ such that for all $i\in\{1,\dots,m\}$
and fixed variables $y_{1}$, $y_{2}$, $\dots$, $y_{i-1}$, $y_{i+1}$,
$\dots$, $y_{m}$, the map 
\[
y\mapsto f(y_{1},y_{2},\dots,y_{i-1},y,y_{i+1},\dots,y_{m})
\]
 is strongly convex with modulus $\mu>0$. 
\item A minimizer $\tilde{y}^{*}=(y_{1}^{*},y_{2}^{*},\dots,y_{m}^{*})$
of $h(\cdot)$ exists. Moreover, $M_{i}$ defined by $M_{i}=\sup\{\|y_{i}^{(k)}-y_{i}^{*}\|:k\geq0\}$
is finite for all $i\in\{1,\dots,m-1\}$.
\end{enumerate}
Suppose two successive iterates $\tilde{y}^{(k-1)}=(y_{1}^{(k-1)},y_{2}^{(k-1)},\dots,y_{m}^{(k-1)})$
and $\tilde{y}^{(k)}$ defined similarly are produced by alternating
minimization described in \eqref{eq:CCM}. Let $M=\max_{i\in\{1,\dots,m-1\}}M_{i}$.
Then \textup{
\begin{equation}
\begin{array}{c}
h(\tilde{y}^{(k-1)})-h(\tilde{y}^{*})\geq h(\tilde{y}^{(k)})-h(\tilde{y}^{*})+\frac{\mu}{2(m-1)^{3}M^{2}L^{2}}[h(\tilde{y}^{(k)})-h(\tilde{y}^{*})]^{2}.\end{array}\label{eq:conv-rate-recurr}
\end{equation}
}\textup{\emph{Applying Lemma \ref{lem:seq-conv-rate} to $a_{k}:=h(\tilde{y}^{(k)})-h(\tilde{y}^{*})$
gives}} 
\[
\begin{array}{c}
h(\tilde{y}^{(k)})-h(\tilde{y}^{*})\leq\frac{1}{k}\max\{\frac{3(m-1)^{3}M^{2}L^{2}}{\mu},h(\tilde{y}^{(1)})-h(\tilde{y}^{*}),2[h(\tilde{y}^{(2)})-h(\tilde{y}^{*})]\},\end{array}
\]
and 
\[
\begin{array}{c}
h(\tilde{y}^{(k)})-h(\tilde{y}^{*})\leq\max\left\{ \left(\frac{1}{2}\right)^{(k-1)/2}[h(\tilde{y}^{(0)})-h(\tilde{y}^{*})],\frac{8(m-1)^{3}M^{2}L^{2}}{\mu(k-1)}\right\} .\end{array}
\]
\end{thm}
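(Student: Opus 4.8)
The plan is to reduce everything to the one-step quadratic recursion \eqref{eq:conv-rate-recurr}; once that is in hand, the two displayed rate estimates follow by a direct application of Lemma \ref{lem:seq-conv-rate} to the nonnegative sequence $a_k := h(\tilde y^{(k)}) - h(\tilde y^{*})$ with $\alpha = \frac{\mu}{2(m-1)^{3}M^{2}L^{2}}$, as explained at the end. Throughout I would write $\tilde y^{(k,i)} := (y_1^{(k)},\dots,y_i^{(k)},y_{i+1}^{(k-1)},\dots,y_m^{(k-1)})$ for the partially updated iterate after the $i$th block has been minimized during pass $k$, so that $\tilde y^{(k,0)} = \tilde y^{(k-1)}$ and $\tilde y^{(k,m)} = \tilde y^{(k)}$, and abbreviate the block steps by $r_i := \|y_i^{(k)} - y_i^{(k-1)}\|$ and their total by $R^2 := \sum_{i=1}^m r_i^2$.

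First I would establish the sufficient-decrease (lower) bound. For each $i$, the map $y \mapsto h(y_1^{(k)},\dots,y_{i-1}^{(k)},y,y_{i+1}^{(k-1)},\dots,y_m^{(k-1)})$ is strongly convex with modulus $\mu$ (hypothesis (2) on $f$, together with convexity of $g_i$), and by \eqref{eq:CCM} its minimizer is $y_i^{(k)}$; hence $h(\tilde y^{(k,i-1)}) - h(\tilde y^{(k,i)}) \ge \frac{\mu}{2} r_i^2$. Telescoping over $i=1,\dots,m$ gives $a_{k-1} - a_k = h(\tilde y^{(k-1)}) - h(\tilde y^{(k)}) \ge \frac{\mu}{2} R^2$.

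The crux is the matching upper bound on $a_k$ in terms of $R$. By convexity of $h$, $a_k \le \langle \xi, \tilde y^{(k)} - \tilde y^{*}\rangle$ for any $\xi \in \partial h(\tilde y^{(k)})$, and I would choose $\xi$ blockwise. The optimality condition for the $i$th subproblem reads $0 \in \nabla_i f(\tilde y^{(k,i)}) + \partial g_i(y_i^{(k)})$, so there is $s_i \in \partial g_i(y_i^{(k)})$ with $s_i = -\nabla_i f(\tilde y^{(k,i)})$; taking $\xi_i = \nabla_i f(\tilde y^{(k)}) + s_i = \nabla_i f(\tilde y^{(k)}) - \nabla_i f(\tilde y^{(k,i)})$ produces a legitimate subgradient and yields $a_k \le \sum_{i=1}^m \langle \nabla_i f(\tilde y^{(k)}) - \nabla_i f(\tilde y^{(k,i)}),\, y_i^{(k)} - y_i^{*}\rangle$. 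The key structural observation --- and the reason hypothesis (3) needs $M_i$ finite only for $i \le m-1$ --- is that the $i=m$ term vanishes, since $\tilde y^{(k,m)} = \tilde y^{(k)}$. For $i \le m-1$ I would bound each summand by Cauchy--Schwarz, use $\|y_i^{(k)} - y_i^{*}\| \le M_i \le M$, and then control $\|\nabla_i f(\tilde y^{(k)}) - \nabla_i f(\tilde y^{(k,i)})\| \le L\|\tilde y^{(k)} - \tilde y^{(k,i)}\| = L\bigl(\sum_{j>i} r_j^2\bigr)^{1/2} \le LR$ by $L$-Lipschitz continuity of $\nabla f$ (hypothesis (1)), noting $\tilde y^{(k)}$ and $\tilde y^{(k,i)}$ differ only in blocks $i+1,\dots,m$. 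Collecting the $m-1$ terms then yields a bound of the form $a_k \le (m-1)^{3/2} L M R$, hence $R^2 \ge a_k^2 / \bigl((m-1)^{3} L^2 M^2\bigr)$.

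The main obstacle I anticipate is precisely this second step: one must choose the subgradient of $h$ compatibly with the block optimality conditions so that the controllable differences $\nabla_i f(\tilde y^{(k)}) - \nabla_i f(\tilde y^{(k,i)})$ appear (rather than uncontrolled gradients), exploit the cancellation of the last block, and then track the Lipschitz and Cauchy--Schwarz constants carefully enough to land on the stated power of $(m-1)$; the boundedness assumption (3) is exactly what makes these gradient differences summable. Combining the two bounds gives $a_{k-1} - a_k \ge \frac{\mu}{2} R^2 \ge \frac{\mu}{2(m-1)^{3}M^{2}L^{2}} a_k^2$, which is \eqref{eq:conv-rate-recurr}. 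Finally, with $\alpha = \frac{\mu}{2(m-1)^{3}M^{2}L^{2}}$ I would feed this into Lemma \ref{lem:seq-conv-rate}: part (2) applied to $a_0 = h(\tilde y^{(0)}) - h(\tilde y^{*})$ gives the second displayed estimate directly, since $4/\alpha = 8(m-1)^{3}M^{2}L^{2}/\mu$; and the first displayed estimate follows from part (1) together with the elementary observation that replacing the numerator by $\max\{1.5/\alpha, a_1, 2a_2\}$ absorbs the base hypotheses $a_1 \le 1.5/\alpha$ and $a_2 \le 1.5/(2\alpha)$ when they fail, while $1.5/\alpha = 3(m-1)^{3}M^{2}L^{2}/\mu$ matches the leading term.
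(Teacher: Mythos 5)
Your proposal is correct and follows essentially the same route as the paper's proof: block strong convexity gives the sufficient decrease, the block optimality conditions yield a subgradient of $h$ at $\tilde y^{(k)}$ whose $m$th component vanishes (which is exactly why only $M_1,\dots,M_{m-1}$ are needed), Cauchy--Schwarz plus Lipschitz continuity of $\nabla f$ bound $a_k$ by the step lengths, and Lemma \ref{lem:seq-conv-rate} (with the same shrinking-$\alpha$ trick for part (1)) finishes. The only difference is bookkeeping: your $\ell_2$ bound $\|\tilde y^{(k)}-\tilde y^{(k,i)}\|=\bigl(\sum_{j>i}r_j^2\bigr)^{1/2}\le R$ with $m-1$ summands actually gives $a_k\le (m-1)MLR$, hence the sharper constant $\frac{\mu}{2(m-1)^2M^2L^2}$ in the recurrence (the paper's sum-of-block-norms bound loses an extra factor of $m-1$ via Cauchy--Schwarz), which you then deliberately relax to the stated $(m-1)^3$, so \eqref{eq:conv-rate-recurr} follows a fortiori.
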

\begin{proof}
The proof of this result follows similar ideas as those in \cite{Chambolle_Pock_2015_acc_BCD},
which in turn appeared in \cite{Beck_Tetruashvili_2013,Beck_alt_min_SIOPT_2015}.
Since we will use elements of this proof for the proof of Theorem
\ref{thm:conv-rate-ext-Dykstra}, we now give a self contained proof.
For each $i$, let $h_{i}:X\to\mathbb{R}$ be defined by 
\[
h_{i}(y)=h(y_{1}^{(k)},y_{2}^{(k)},\dots,y_{i-1}^{(k)},y,y_{i+1}^{(k-1)},\dots,y_{m}^{(k-1)}).
\]
In other words, $h_{i}(\cdot)$ is the $i$th block of $h:X^{m}\to\mathbb{R}$.
The mapping $h_{i}(\cdot)$ has minimizer $y_{i}^{(k)}$, and is strongly
convex with modulus $\mu$ from assumption (2). Hence 
\[
\begin{array}{c}
h_{i}(y_{i}^{(k-1)})\geq h_{i}(y_{i}^{(k)})+\frac{\mu}{2}\|y_{i}^{(k)}-y_{i}^{(k-1)}\|^{2}.\end{array}
\]
Hence 
\begin{equation}
\begin{array}{c}
h(\tilde{y}^{(k-1)})-h(\tilde{y}^{*})\geq h(\tilde{y}^{(k)})-h(\tilde{y}^{*})+\underset{i=1}{\overset{m}{\sum}}\frac{\mu}{2}\|y_{i}^{(k)}-y_{i}^{(k-1)}\|^{2}.\end{array}\label{eq:rate-1st-bdd}
\end{equation}
Next, we try to find a subgradient in $\partial h(\tilde{y}^{(k)})$
by looking at the components $\partial h_{i}(\tilde{y})$. It is clear
that $0\in\partial h_{m}(y_{m}^{(k)})$. We then look at the $i$th
component of $f'(\cdot)$, which we denote by $f_{i}'(\cdot)$. For
each $i\in\{1,\dots,m\}$, the optimality conditions of each iteration
of alternating minimization implies that 
\[
0\in f'_{i}(y_{1}^{(k)},y_{2}^{(k)},\dots,y_{i-1}^{(k)},y_{i}^{(k)},y_{i+1}^{(k-1)},\dots,y_{m}^{(k-1)})+\partial g_{i}(y_{i}^{(k)}).
\]
Thus 
\[
f'_{i}(\tilde{y}^{(k)})-f'_{i}(y_{1}^{(k)},y_{2}^{(k)},\dots,y_{i-1}^{(k)},y_{i}^{(k)},y_{i+1}^{(k-1)},\dots,y_{m}^{(k-1)})\in f'_{i}(\tilde{y}^{(k)})+\partial g_{i}(y_{i}^{(k)}).
\]
Choose a subgradient $s\in\partial h(\tilde{y}^{(k)})$, with $s\in X^{m}$
such that 
\[
s_{i}=f'_{i}(\tilde{y}^{(k)})-f'_{i}(y_{1}^{(k)},y_{2}^{(k)},\dots,y_{i-1}^{(k)},y_{i}^{(k)},y_{i+1}^{(k-1)},\dots,y_{m}^{(k-1)}).
\]
We have 
\begin{equation}
\begin{array}{rcl}
\|s_{i}\| & \leq & \|f'_{i}(\tilde{y}^{(k)})-f'_{i}(y_{1}^{(k)},y_{2}^{(k)},\dots,y_{i-1}^{(k)},y_{i}^{(k)},y_{i+1}^{(k-1)},\dots,y_{m}^{(k-1)})\|\\
 & \leq & L\underset{j=i+1}{\overset{m}{\sum}}\|y_{j}^{(k)}-y_{j}^{(k-1)}\|\\
 & \leq & L\underset{j=2}{\overset{m}{\sum}}\|y_{j}^{(k)}-y_{j}^{(k-1)}\|.
\end{array}\label{eq:s-i-ineq}
\end{equation}
The above derivation also reminds us that $\|s_{m}\|=0$. Thus, making
use of condition (3), we have 
\begin{equation}
\begin{array}{rcl}
h(\tilde{y}^{*}) & \geq & h(\tilde{y}^{(k)})+\langle s,\tilde{y}^{*}-\tilde{y}^{(k)}\rangle\\
\Rightarrow h(\tilde{y}^{(k)})-h(\tilde{y}^{*}) & \leq & -\langle s,\tilde{y}^{*}-\tilde{y}^{(k)}\rangle\\
 & \leq & \underset{i=1}{\overset{m-1}{\sum}}\|s_{i}\|\|y_{i}^{*}-y_{i}^{(k)}\|\\
 & \leq & L\bigg[\underset{j=2}{\overset{m}{\sum}}\|y_{j}^{(k)}-y_{j}^{(k-1)}\|\bigg]\bigg[\underset{i=1}{\overset{m-1}{\sum}}\|y_{i}^{*}-y_{i}^{(k)}\|\bigg]\\
 & \leq & (m-1)ML\bigg[\underset{j=2}{\overset{m}{\sum}}\|y_{j}^{(k)}-y_{j}^{(k-1)}\|\bigg].
\end{array}\label{eq:rate-2nd-bdd}
\end{equation}
Applying \eqref{eq:rate-2nd-bdd} on \eqref{eq:rate-1st-bdd} gives
\begin{equation}
\begin{array}{rcl}
h(\tilde{y}^{(k-1)})-h(\tilde{y}^{*}) & \geq & h(\tilde{y}^{(k)})-h(\tilde{y}^{*})+\underset{i=1}{\overset{m}{\sum}}\frac{\mu}{2}\|y_{i}^{(k)}-y_{i}^{(k-1)}\|^{2}\\
 & \geq & h(\tilde{y}^{(k)})-h(\tilde{y}^{*})+\underset{i=2}{\overset{m}{\sum}}\frac{\mu}{2}\|y_{i}^{(k)}-y_{i}^{(k-1)}\|^{2}\\
 & \geq & h(\tilde{y}^{(k)})-h(\tilde{y}^{*})+\frac{\mu}{2(m-1)}\bigg[\underset{i=2}{\overset{m}{\sum}}\|y_{i}^{(k)}-y_{i}^{(k-1)}\|\bigg]^{2}\\
 & \geq & h(\tilde{y}^{(k)})-h(\tilde{y}^{*})+\frac{\mu}{2(m-1)^{3}M^{2}L^{2}}[h(\tilde{y}^{(k)})-h(\tilde{y}^{*})]^{2}.
\end{array}\label{eq:rate-3rd-bdd}
\end{equation}
Let $a_{k}=h(\tilde{y}^{(k)})-h(\tilde{y}^{*})$. Applying Lemma \ref{lem:seq-conv-rate}
gives us our conclusion. (For the first formula, $\alpha=\min\{\frac{\mu}{2(m-1)^{3}M^{2}L^{2}},\frac{1.5}{a_{1}},\frac{0.75}{a_{2}}\}$.)
\end{proof}
It is clear to see that condition (3) in Theorem \ref{thm:CCM-conv-rate}
is satisfied when the level sets of $h(\cdot)$ are bounded. Condition
(3) can be easily amended to having all but one of the $M_{i}$ for
$i\in\{1,\dots,m\}$ being finite.

\subsection{Convergence rate of extended Dykstra's algorithm }

In Dykstra's algorithm, the function $f(\cdot)$ in \eqref{eq:P-primal}
is quadratic, and therefore its gradient is linear. Furthermore, each
block $f_{i}(\cdot)$ is strongly convex with modulus 1. Thus conditions
(1) and (2) of Theorem \ref{thm:CCM-conv-rate} are satisfied. We
make some remarks condition (3) of Theorem \ref{thm:CCM-conv-rate}.
\begin{rem}
(Condition (3) of Theorem \ref{thm:CCM-conv-rate} for Dykstra's algorithm)
As pointed out in \cite{Han88}, there may not exist a minimizer $\tilde{y}^{*}$
of the dual problem $(D')$. Consider for example the problem of projecting
onto the intersection of two circles in $\mathbb{R}^{2}$ intersecting
at only one point. Furthermore, Gaffke and Mathar \cite[Lemma 2]{Gaffke_Mathar}
showed that for Dykstra's algorithm, if there is a $\lambda>2$ such
that $\|x_{m}^{(k)}-\bar{x}\|^{2}\in O(1/k^{\lambda})$, then $y_{i}^{*}=\lim_{k\to\infty}y_{i}^{k}$
exists with $\delta^{*}(y_{i}^{*},C_{i})$ finite, and $\tilde{y}^{*}=(y_{1}^{*},\dots,y_{m}^{*})$
minimizing the function $h(\cdot)$ of \eqref{eq:D-prime}. This result
can somewhat be seen as a converse of Theorem \ref{thm:CCM-conv-rate}.
\begin{rem}
(Finiteness of the $M_{i}$'s) In our analysis of Dykstra's algorithm,
suppose all but one of the $M_{i}$'s in Theorem \ref{thm:CCM-conv-rate}(3)
are finite for $i\in\{1,\dots,m\}$. The Boyle-Dykstra theorem implies
that the limit 
\[
\lim_{k\to\infty}[d-y_{1}^{(k)}-\cdots-y_{m}^{(k)}]=\lim_{k\to\infty}x_{m}^{(k)}
\]
 exists. This would imply that all the $M_{i}$'s are finite.
\end{rem}
\end{rem}
 We now provide the additional details to show that Algorithm \ref{alg:Extended-Dykstra}
(the extended Dykstra's algorithm) also converges at an $O(1/k)$
rate. 
\begin{thm}
\label{thm:conv-rate-ext-Dykstra}(Convergence rate of extended Dykstra's
algorithm) Consider Algorithm \ref{alg:Extended-Dykstra}. Recall
the definition of $h(\cdot)$ in \eqref{eq:D-prime}. Suppose the
following holds:
\begin{enumerate}
\item [(3$^{\prime}$)]A minimizer $\tilde{y}^{*}=(y_{1}^{*},y_{2}^{*},\dots,y_{m}^{*})$
of $h(\cdot)$ exists. Moreover, $M_{i}$ defined by $M_{i}=\sup\{\|y_{i}^{(k)}-y_{i}^{*}\|:k\geq0\}$
is finite for all $i\in\{1,\dots,m+1\}$.
\end{enumerate}
(Compare this to condition (3) of Theorem \ref{thm:CCM-conv-rate}.)
Recall the definition of $h^{k}(\cdot)$ in \eqref{eq:hk}. Then the
sequence $\{h^{k}(y_{1}^{(k)},\dots,y_{m+1}^{(k)})\}_{k},$ converges
to $h(\tilde{y}^{*})$ at a rate of $O(1/k)$. \end{thm}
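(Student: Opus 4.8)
The plan is to rerun the argument of Theorem~\ref{thm:CCM-conv-rate} almost verbatim, but on the \emph{moving} family of functions $h^{k}$ rather than on a single fixed objective, using the increments $\|y_{i}^{(k)}-y_{i}^{(k-1)}\|$ once more as the bridge between a first-order descent estimate and a subgradient estimate. The first task is to pin down a reference minimizer that works simultaneously for every $h^{k}$. I would take $\tilde{y}^{**}=(y_{1}^{*},\dots,y_{m}^{*},0)$. Since $\delta^{*}(0,D)=0$ for any nonempty $D$, one has $h^{k}(\tilde{y}^{**})=h(\tilde{y}^{*})=\bar{h}(\tilde{y}^{**})$ for every $k$; combined with $h^{k}\geq\bar{h}$ and the identity $\inf_{y}h^{k}(y)=\min_{y}\bar{h}(y)=h(\tilde{y}^{*})$ already established inside the proof of Theorem~\ref{thm:ppty-extended-Dykstra}, this shows that $\tilde{y}^{**}$ minimizes $h^{k}$ for \emph{all} $k$ and that $h(\tilde{y}^{*})=\min h^{k}$. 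Setting $a_{k}:=h^{k}(\tilde{y}^{(k)})-h(\tilde{y}^{*})\geq0$, Theorem~\ref{thm:ppty-extended-Dykstra}(1) immediately yields the descent bound $a_{k-1}-a_{k}\geq\tfrac{1}{2}\sum_{i=1}^{m+1}\|y_{i}^{(k)}-y_{i}^{(k-1)}\|^{2}$, the constant term $h(\tilde{y}^{*})$ cancelling on both sides.

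Next I would reproduce the subgradient estimate \eqref{eq:s-i-ineq}--\eqref{eq:rate-2nd-bdd}. Writing $q(\tilde{y})=\tfrac{1}{2}\|d-(y_{1}+\cdots+y_{m+1})\|^{2}$ for the smooth part of $h^{k}$, its $i$th partial gradient $q_{i}'(\tilde{y})=(y_{1}+\cdots+y_{m+1})-d$ is independent of $i$, so the blockwise Lipschitz bound used in \eqref{eq:s-i-ineq} holds with constant $1$. The crucial point, and the place where lines~8 and~12 of Algorithm~\ref{alg:Extended-Dykstra} earn their keep, is that the last block is updated by projecting onto $C_{m+1}^{k}$ but enters $h^{k}$ through $H_{m+1}^{k}$: because $x_{m+1}^{(k)}$ maximizes $\langle y_{m+1}^{(k)},\cdot\rangle$ over both $C_{m+1}^{k}$ and $H_{m+1}^{k}$, the optimality condition of the line-10 projection is exactly the stationarity of the $(m+1)$th block of $h^{k}$. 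Hence the componentwise choice $s_{i}=q_{i}'(\tilde{y}^{(k)})-q_{i}'(y_{1}^{(k)},\dots,y_{i}^{(k)},y_{i+1}^{(k-1)},\dots,y_{m+1}^{(k-1)})$ defines a genuine subgradient $s\in\partial h^{k}(\tilde{y}^{(k)})$ with $\|s_{m+1}\|=0$ and $\|s_{i}\|\leq\sum_{j=i+1}^{m+1}\|y_{j}^{(k)}-y_{j}^{(k-1)}\|$.

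Finally I would combine the two estimates exactly as in \eqref{eq:rate-2nd-bdd}--\eqref{eq:rate-3rd-bdd}. Convexity of $h^{k}$ at the common minimizer $\tilde{y}^{**}$ gives $a_{k}\leq\langle s,\tilde{y}^{(k)}-\tilde{y}^{**}\rangle\leq\sum_{i=1}^{m}\|s_{i}\|\,M_{i}\leq mM\sum_{j=2}^{m+1}\|y_{j}^{(k)}-y_{j}^{(k-1)}\|$, where the $i=m+1$ term drops because $s_{m+1}=0$, and where condition~(3$'$) is invoked with $y_{i}^{**}=y_{i}^{*}$ for $i\leq m$ and $y_{m+1}^{**}=0$. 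Squaring, applying Cauchy--Schwarz, and inserting the descent bound produces a recurrence of the form $a_{k-1}\geq a_{k}+\alpha a_{k}^{2}$ with $\alpha>0$ depending only on $m$ and $M$, after which Lemma~\ref{lem:seq-conv-rate} delivers the $O(1/k)$ rate. I expect the only genuinely new work, relative to Theorem~\ref{thm:CCM-conv-rate}, to be the two structural facts above: that $(y_{1}^{*},\dots,y_{m}^{*},0)$ is a common minimizer of the moving functions $h^{k}$, and that the QP step's optimality condition transfers from $C_{m+1}^{k}$ to $H_{m+1}^{k}$ so that the $k$-dependence of $h^{k}$ causes no trouble; everything else is the routine telescoping already carried out in Theorem~\ref{thm:CCM-conv-rate}.
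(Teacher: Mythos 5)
Your proposal is correct and follows essentially the same route as the paper's own proof: the common minimizer $(y_{1}^{*},\dots,y_{m}^{*},0)$ of every $h^{k}$ with value $h(\tilde{y}^{*})$, the descent bound from Theorem \ref{thm:ppty-extended-Dykstra}(1), the blockwise subgradient estimate with $L=1$, the bound $h^{k}(\tilde{y}^{(k)})-h(\tilde{y}^{*})\leq mM\sum_{j=2}^{m+1}\|y_{j}^{(k)}-y_{j}^{(k-1)}\|$, and Lemma \ref{lem:seq-conv-rate}. You additionally spell out two points the paper leaves implicit, namely why $(y_{1}^{*},\dots,y_{m}^{*},0)$ minimizes the moving functions $h^{k}$ and why the line-10 projection's optimality condition transfers from $C_{m+1}^{k}$ to $H_{m+1}^{k}$ so that $s_{m+1}=0$ is a valid subgradient component; these are faithful elaborations, not deviations.
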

\begin{proof}
We highlight the differences this proof has with that of Theorem \ref{thm:CCM-conv-rate}.
Theorem \ref{thm:ppty-extended-Dykstra}(1) shows that 
\[
\begin{array}{c}
h^{k-1}(\tilde{y}^{(k-1)})\geq h^{k}(\tilde{y}^{(k)})+\frac{1}{2}\underset{i=1}{\overset{m+1}{\sum}}\|y_{i}^{(k)}-y_{i}^{(k-1)}\|^{2},\end{array}
\]
which plays the role of \eqref{eq:rate-1st-bdd}. Next, if $\tilde{y}^{*}=(y_{1}^{*},\dots,y_{m}^{*})$
is a minimizer of $h(\cdot)$, then $(y_{1}^{*},\dots,y_{m}^{*},0)$
is a minimizer of $h^{k}(\cdot)$ for all $k$. Moreover, 
\[
h^{k}(y_{1}^{*},\dots,y_{m}^{*},0)=h(\tilde{y}^{*}).
\]
Next, we can prove an analogous result to \eqref{eq:s-i-ineq} with
$L=1$. The analogous result to \eqref{eq:rate-2nd-bdd} is 
\begin{equation}
\begin{array}{c}
h^{k}(\tilde{y}^{(k)})-h(\tilde{y}^{*})\leq mML\bigg[\underset{j=2}{\overset{m+1}{\sum}}\|y_{j}^{(k)}-y_{j}^{(k-1)}\|\bigg].\end{array}\label{eq:rate-2nd-bdd-1}
\end{equation}
The analogous result to \eqref{eq:rate-3rd-bdd} is
\begin{equation}
\begin{array}{c}
h^{k-1}(\tilde{y}^{(k-1)})-h(\tilde{y}^{*})\geq h^{k}(\tilde{y}^{(k)})-h(\tilde{y}^{*})+\frac{\mu}{2m^{3}M^{2}L^{2}}[h^{k}(\tilde{y}^{(k)})-h(\tilde{y}^{*})]^{2}.\end{array}\label{eq:rate-3rd-bdd-1}
\end{equation}
 The conclusion follows with steps similar to the proof of Theorem
\ref{thm:CCM-conv-rate}.
\end{proof}
An indicator of whether an $O(1/k)$ convergence rate is achieved
would be whether condition (3) in Theorem \ref{thm:CCM-conv-rate}
is satisfied. The next result gives sufficient conditions. 
\begin{thm}
\label{thm:condition-bdd-iters}(Condition for bounded dual iterates)
Suppose $X=\mathbb{R}^{n}$, and consider the BAP \eqref{eq:P-primal}. 
\begin{enumerate}
\item Suppose at the primal optimal solution $x^{*}=P_{C}(d)$, we have
\begin{equation}
\begin{array}{l}
\underset{i=1}{\overset{m}{\sum}}v_{i}=0\mbox{ and }v_{i}\in N_{C_{i}}(x^{*})\mbox{ for all }i\in\{1,\dots,m\}\\
\quad\mbox{ implies }v_{i}=0\mbox{ for all }i\in\{1,\dots,m\}.
\end{array}\label{eq:CQ1}
\end{equation}
Then the iterates $\{\tilde{y}^{(k)}\}$ of Dykstra's algorithm are
bounded. Moreover, an accumulation point exists, and is an optimal
solution for $(D')$, so condition (3) of Theorem \ref{thm:CCM-conv-rate}
holds.
\item Suppose at the primal optimal solution $x^{*}=P_{C}(d)$, we have
\begin{equation}
\begin{array}{l}
\underset{i=1}{\overset{m+1}{\sum}}v_{i}=0,v_{m+1}\in N_{C}(x^{*})\mbox{ and }v_{i}\in N_{C_{i}}(x^{*})\mbox{ for all }i\in\{1,\dots,m\}\\
\quad\mbox{ implies }v_{i}=0\mbox{ for all }i\in\{1,\dots,m+1\}.
\end{array}\label{eq:CQ2}
\end{equation}
Then the iterates $\{\tilde{y}^{(k)}\}$ of the extended Dykstra's
algorithm are bounded. Moreover, an accumulation point exists, and
is a minimizer of $\bar{h}:(\mathbb{R}^{n})^{m+1}\to\mathbb{R}$ defined
in \eqref{eq:hk}, so condition (3$^{\prime}$) of Theorem \ref{thm:conv-rate-ext-Dykstra}
holds.
\item Suppose $N_{C_{i}}(x^{*})$ does not contain a line for all $i\in\{1,\dots,m\}$.
In other words, the cones $N_{C_{i}}(x^{*})$ are pointed for all
$i$. Then \eqref{eq:CQ1} and \eqref{eq:CQ2} are equivalent.
\end{enumerate}
\end{thm}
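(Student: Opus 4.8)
The plan is to treat the two boundedness statements (1) and (2) by a single normalization-and-contradiction argument, and to prove the equivalence (3) by a direct manipulation of the two constraint qualifications. Throughout I will use the Boyle--Dykstra convergence $x_m^{(k)}\to x^*$ (respectively $x_{m+1}^{(k)}\to x^*$ for the extended algorithm) and the summability $\sum_k\sum_i\|y_i^{(k)}-y_i^{(k-1)}\|^2<\infty$ furnished by Theorem \ref{thm:ppty-extended-Dykstra}(2), which reduces to the corresponding fact for the original Dykstra's algorithm when $C_{m+1}^k\equiv X$.

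For statement (1), I would first note that summability forces $\|y_i^{(k)}-y_i^{(k-1)}\|\to 0$ for every $i$, so by \eqref{eq:about-x} the gaps $x_i^{(k)}-x_m^{(k)}=\sum_{j=i+1}^m(y_j^{(k)}-y_j^{(k-1)})$ tend to $0$ and hence $x_i^{(k)}\to x^*$ for all $i$. Suppose $\{\tilde y^{(k)}\}$ were unbounded; pass to a subsequence with $t_k:=\|\tilde y^{(k)}\|\to\infty$ and, using finite-dimensionality, with $\tilde y^{(k)}/t_k\to\tilde v=(v_1,\dots,v_m)$ a unit vector. Since $y_1^{(k)}+\cdots+y_m^{(k)}=d-x_m^{(k)}$ stays bounded, dividing by $t_k$ gives $\sum_{i=1}^m v_i=0$. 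Because $y_i^{(k)}=z_i^{(k)}-P_{C_i}(z_i^{(k)})\in N_{C_i}(x_i^{(k)})$ and normal cones are cones, $y_i^{(k)}/t_k\in N_{C_i}(x_i^{(k)})$; letting $k\to\infty$ and invoking the outer semicontinuity (closed graph) of the normal-cone map of a convex set together with $x_i^{(k)}\to x^*$ yields $v_i\in N_{C_i}(x^*)$. Then \eqref{eq:CQ1} forces every $v_i=0$, contradicting $\|\tilde v\|=1$; hence the iterates are bounded and an accumulation point $\tilde y^\infty$ exists. To see it is optimal for $(D')$, I would exhibit $s^{(k)}\in\partial h(\tilde y^{(k)})$ with $\|s^{(k)}\|\to 0$ exactly as in the estimate \eqref{eq:s-i-ineq} bounding $\|s_i^{(k)}\|$ by $\sum_j\|y_j^{(k)}-y_j^{(k-1)}\|$, then pass to the limit using the closed graph of $\partial h$ to get $0\in\partial h(\tilde y^\infty)$; boundedness makes each $M_i$ finite, so condition (3) of Theorem \ref{thm:CCM-conv-rate} holds. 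Statement (2) is identical after replacing $m$ by $m+1$: the only new point is the last coordinate, where $y_{m+1}^{(k)}\in N_{C_{m+1}^k}(x_{m+1}^{(k)})$ and $C\subset C_{m+1}^k$ give $\langle y_{m+1}^{(k)},w-x_{m+1}^{(k)}\rangle\le 0$ for all $w\in C$; dividing by $t_k$ and letting $k\to\infty$ (with $x_{m+1}^{(k)}\to x^*$) produces $v_{m+1}\in N_C(x^*)$, so \eqref{eq:CQ2} applies, and the same limiting-optimality argument identifies the accumulation point as a minimizer of $\bar h$ in \eqref{eq:hk}, giving condition (3$'$) of Theorem \ref{thm:conv-rate-ext-Dykstra}.

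For statement (3), the implication \eqref{eq:CQ2}$\Rightarrow$\eqref{eq:CQ1} is immediate and needs no pointedness: given data for \eqref{eq:CQ1}, set $v_{m+1}=0\in N_C(x^*)$ and apply \eqref{eq:CQ2}. For the converse I would observe that \eqref{eq:CQ1} is precisely the dual constraint qualification guaranteeing the exact normal-cone intersection rule $N_C(x^*)=\sum_{i=1}^m N_{C_i}(x^*)$ (the standard intersection formula of variational analysis, valid here because \eqref{eq:CQ1} renders the sum of the cones closed). Then, given data for \eqref{eq:CQ2}, write $v_{m+1}=\sum_{i=1}^m w_i$ with $w_i\in N_{C_i}(x^*)$; from $\sum_{i=1}^{m+1}v_i=0$ we obtain $\sum_{i=1}^m(v_i+w_i)=0$ with $v_i+w_i\in N_{C_i}(x^*)$, so \eqref{eq:CQ1} forces $v_i=-w_i$. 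Thus both $v_i$ and $-v_i$ lie in $N_{C_i}(x^*)$, i.e. $v_i$ is in the lineality space of $N_{C_i}(x^*)$; pointedness gives $v_i=0$ for $i\le m$, and then $v_{m+1}=-\sum_{i=1}^m v_i=0$. This is exactly the conclusion required by \eqref{eq:CQ2}, so \eqref{eq:CQ1}$\Rightarrow$\eqref{eq:CQ2}.

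The routine parts are the two limiting arguments (outer semicontinuity of the normal cone and the vanishing-subgradient identification of the accumulation point), which I expect to go through smoothly once $x_i^{(k)}\to x^*$ is in hand. The main obstacle is the normal-cone sum rule invoked in (3): I must ensure that \eqref{eq:CQ1} really does yield the \emph{exact} (not merely closure) identity $N_C(x^*)=\sum_{i=1}^m N_{C_i}(x^*)$, since the representation $v_{m+1}=\sum_i w_i$ is what makes the pointedness step work; establishing the closedness of $\sum_i N_{C_i}(x^*)$ under \eqref{eq:CQ1} is therefore the delicate point of the proof.
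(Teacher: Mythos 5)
Your proposal is correct and takes essentially the same route as the paper: boundedness in (1) and (2) is obtained by the same normalization-and-contradiction argument (normalize the unbounded dual iterates, use $\sum_{i}y_{i}^{(k)}=d-x_{m}^{(k)}\to d-x^{*}$, pass normal-cone memberships $y_{i}^{(k)}\in N_{C_{i}}(x_{i}^{(k)})$ to the limit, and handle the $(m+1)$st block via $\langle y_{m+1}^{(k)},c-x_{m+1}^{(k)}\rangle\le 0$ for $c\in C$), and (3) is proved by the same combination of the exact normal-cone intersection rule under \eqref{eq:CQ1} (the paper cites \cite[Theorem 6.42]{RW98}, which resolves the exactness concern you flag) and pointedness. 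The only differences are cosmetic: you normalize by $\|\tilde{y}^{(k)}\|$ instead of $\max_{i}\|y_{i}^{(k)}\|$, and you spell out (via vanishing subgradients and the closed graph of $\partial h$) the identification of the accumulation point as a dual minimizer, a step the paper declares straightforward and omits.
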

\begin{proof}
For (1), we prove the boundedness of the iterates for Dykstra's algorithm.
The other parts of the result are straightforward. Seeking a contradiction,
suppose the iterates $\{\tilde{y}^{(k)}\}$ are not bounded. Then
\begin{eqnarray}
\sum_{i=1}^{m}y_{i}^{(k)} & = & d-x_{m}^{(k)}\nonumber \\
\frac{1}{\max_{i}\|y_{i}^{(k)}\|}\sum_{i=1}^{m}y_{i}^{(k)} & = & \frac{1}{\max_{i}\|y_{i}^{(k)}\|}[d-x_{m}^{(k)}].\label{eq:bdd-proof-1}
\end{eqnarray}
By the convergence of Dykstra's algorithm, $\lim_{k\to\infty}[d-x_{m}^{(k)}]$
exists. Moreover, $\limsup_{k\to\infty}\max_{i}\|y_{i}^{(k)}\|=\infty$,
so by taking a subsequence if necessary (we do not relabel), the limit
of the RHS of \eqref{eq:bdd-proof-1} is zero. Let $\hat{y}_{i}^{(k)}=\frac{y_{i}^{(k)}}{\max_{j}\|y_{j}^{(k)}\|}$.
We thus have 
\[
\begin{array}{c}
\underset{i=1}{\overset{m}{\sum}}\hat{y}_{i}^{(k)}=0.\end{array}
\]
The sequence $\{(\hat{y}_{1}^{(k)},\dots,\hat{y}_{m}^{(k)})\}_{k}$
has a convergent subsequence. Let an accumulation point be $(\hat{y}_{1}^{*},\dots,\hat{y}_{m}^{*})$.
Note that $\hat{y}_{i}^{(k)}\in N_{C_{i}}(x_{i}^{(k)})$, so $\hat{y}_{i}^{*}\in N_{C_{i}}(x^{*})$.
But not all the $\hat{y}_{i}^{*}$ are zero. This gives us the contradiction
to \eqref{eq:CQ1}.

We now show how to amend the proof of (1) to prove (2). For the extended
Dykstra's algorithm, we can obtain the formula 
\[
\begin{array}{c}
\frac{1}{\max_{i}\|y_{i}^{(k)}\|}\underset{i=1}{\overset{m+1}{\sum}}y_{i}^{(k)}=\frac{1}{\max_{i}\|y_{i}^{(k)}\|}[d-x_{m+1}^{(k)}],\end{array}
\]
which is similar to \eqref{eq:bdd-proof-1}. The sequence $\{(\hat{y}_{1}^{(k)},\dots,\hat{y}_{m+1}^{(k)})\}_{k}$
is defined similarly by $\hat{y}_{i}^{(k)}=\frac{y_{i}^{(k)}}{\max_{j}\|y_{j}^{(k)}\|}$,
and has a convergent subsequence with accumulation point $(\hat{y}_{1}^{*},\dots,\hat{y}_{m}^{*})$.
For any $c\in C$, we have 
\[
\langle\hat{y}_{m+1}^{(k)},c-x_{m+1}^{(k)}\rangle\leq0.
\]
As we take limits, we have 
\[
\langle\hat{y}_{m+1}^{*},c-x^{*}\rangle\leq0,
\]
so $\hat{y}_{m+1}^{*}\in N_{C}(x^{*})$. The same steps would imply
that \eqref{eq:CQ2} is violated, hence a contradiction.

Lastly, we prove (3). It is obvious that \eqref{eq:CQ2} implies \eqref{eq:CQ1}
(just take the particular case when $v_{m+1}=0$). We now prove that
\eqref{eq:CQ1} implies \eqref{eq:CQ2}. If \eqref{eq:CQ1} holds,
then the formula for intersection of normal cones of convex sets (see
\cite[Theorem 6.42]{RW98}) implies that 
\[
\begin{array}{c}
N_{C}(x^{*})=\underset{i=1}{\overset{m}{\sum}}N_{C_{i}}(x^{*}).\end{array}
\]
Suppose $\sum_{i=1}^{m+1}v_{i}=0$, where $v_{m+1}\in N_{C}(x^{*})$
and $v_{i}\in N_{C_{i}}(x^{*})$ for all $i\in\{1,\dots,m\}$. We
can write $v_{m+1}=\sum_{i=1}^{m+1}\tilde{v}_{i}$, where $\tilde{v}_{i}\in N_{C_{i}}(x^{*})$
for all $i\in\{1,\dots,m\}$. Then $\sum_{i=1}^{m}(v_{i}+\tilde{v}_{i})=0$,
and $(v_{i}+\tilde{v}_{i})\in N_{C_{i}}(x^{*})$. Condition \eqref{eq:CQ1}
would imply that $v_{i}+\tilde{v}_{i}=0$ for all $i\in\{1,\dots,m\}$.
Since $N_{C_{i}}(x^{*})$ contains no lines for all $i\in\{1,\dots,m\}$,
we have $v_{i}=\tilde{v}_{i}=0$ for all $i\in\{1,\dots,m\}$. This
implies that \eqref{eq:CQ2} holds.\end{proof}
\begin{rem}
We make a few remarks on Theorems \ref{thm:condition-bdd-iters} and
\ref{thm:CCM-conv-rate}.
\begin{enumerate}
\item A simple example of a line and a halfspace shows that \eqref{eq:CQ1}
and \eqref{eq:CQ2} cannot be equivalent if the conditions in (3)
were omitted. Even so, we can check that in this simple example, the
extended Dykstra's algorithm should perform better than the Dykstra's
algorithm in general, even when \eqref{eq:CQ2} fails. See Figure
\ref{fig:line-and-halfspace}. 
\item Even if condition (1) in Theorem \ref{thm:condition-bdd-iters} is
not satisfied, condition (3) of Theorem \ref{thm:CCM-conv-rate} can
hold. For example, consider the case of two (one dimensional) lines
intersecting only at the origin in $\mathbb{R}^{3}$. 
\item The condition \eqref{eq:CQ1} is well known to be equivalent to the
stability of the sets $\{C_{i}\}_{i=1}^{m}$ under perturbations.
See \cite{Kruger_06} for example. Condition \eqref{eq:CQ1} is also
important for establishing linear convergence of the method of alternating
projections for convex sets. See \cite{BB96_survey}.
\end{enumerate}
\end{rem}
\begin{figure}[!h]
\begin{tabular}{|c|c|}
\hline 
\includegraphics[scale=0.25]{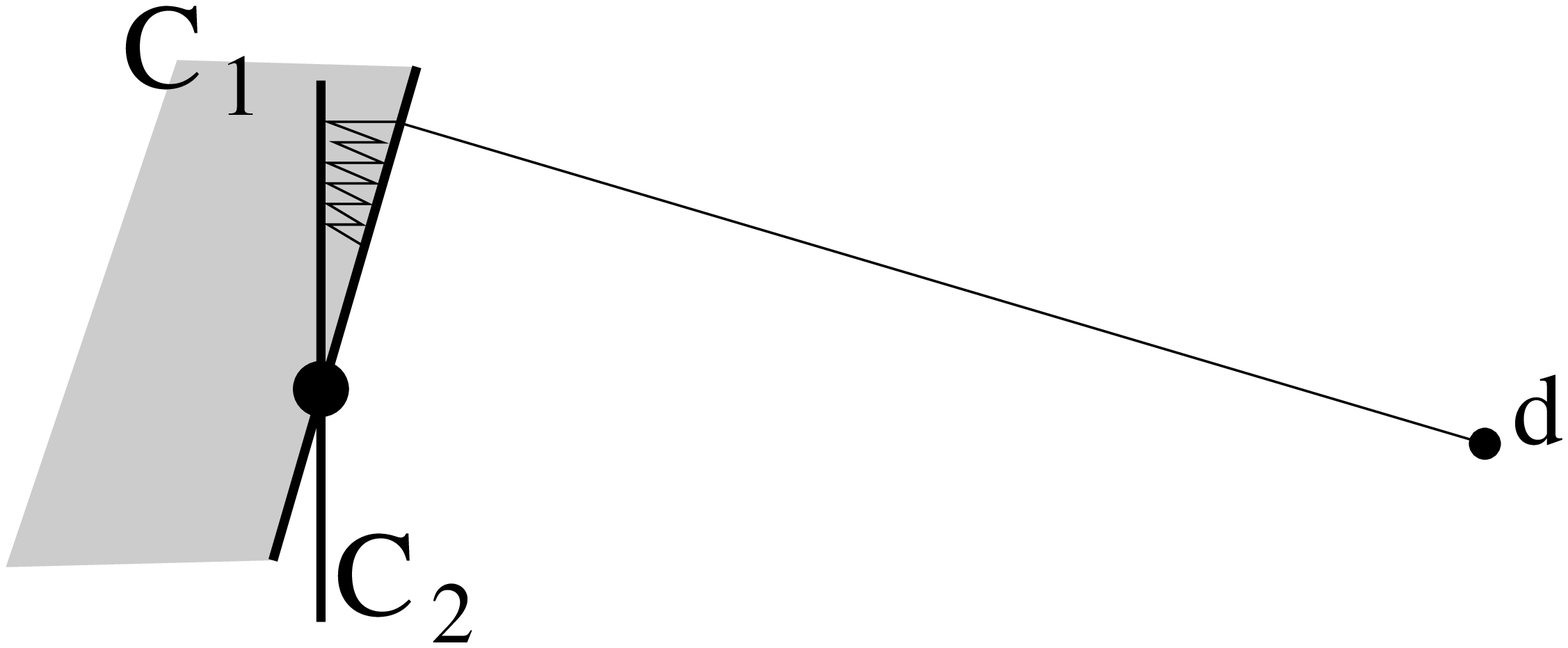} & \includegraphics[scale=0.25]{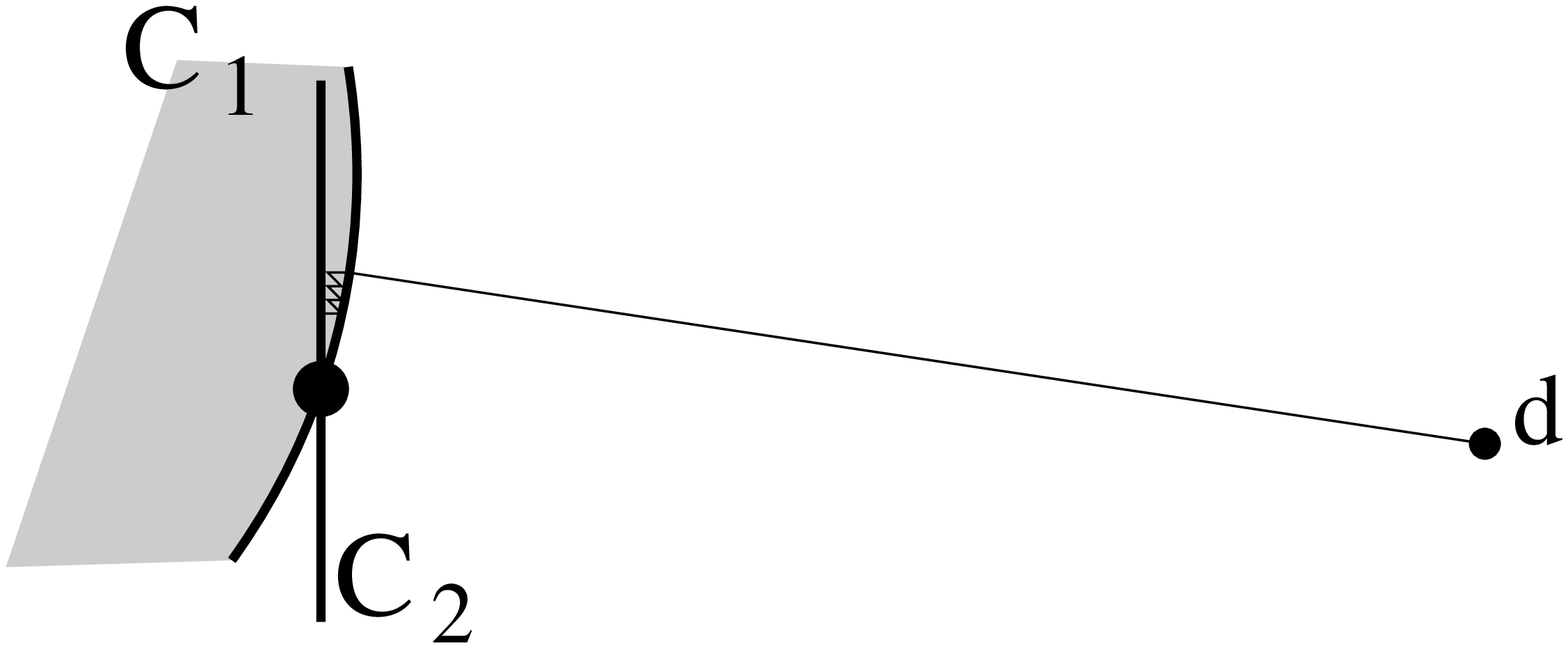}\tabularnewline
\hline 
\end{tabular}

\caption{\label{fig:line-and-halfspace}In the diagram on the left, the line
shows the path Dykstra's algorithm takes. But for both the extended
Dykstra's algorithm in Algorithm \ref{alg:Extended-Dykstra} (even
if \eqref{eq:CQ2} is not satisfied) and Algorithm \ref{alg:SHQP-orig-Dykstra},
we have convergence to $P_{C}(d)$ in a small number of steps. The
diagram on the right shows that Algorithms \ref{alg:Extended-Dykstra}
and \ref{alg:SHQP-orig-Dykstra} are also advantageous for nonpolyhedral
problems. }
\end{figure}

\subsection{\label{sub:SHQP-Dykstra}SHQP strategy for Dykstra's algorithm}

We now show that in the case where a minimizer exists for $h(\cdot)$
as defined in \eqref{eq:D-prime}, the SHQP strategy can be incorporated
into Dykstra's algorithm. We present the following additional step. 
\begin{algorithm}
\label{alg:SHQP-orig-Dykstra}(SHQP strategy for Dykstra's algorithm)
Consider the original warmstart Dykstra's algorithm (Algorithm \ref{alg:Dykstra}).
Between lines 7 and 8, we can add as many copies of the following
code segment as needed.

01 Choose $J\subset\{1,\dots,m\}$

02 Update $y_{1}^{(k)},\dots,y_{m}^{(k)}$ by solving the following
optimization problem 
\begin{eqnarray}
(y_{1}^{(k)},\dots,y_{m}^{(k)})\leftarrow & \underset{y_{1},\dots,y_{m}}{\arg\min} & \begin{array}{c}
f(y_{1}+\cdots+y_{m})+\underset{i=1}{\overset{m}{\sum}}\delta^{*}(y_{i},H_{i})\end{array}\label{eq:min-alg}\\
 & \mbox{s.t. } & \begin{array}{c}
y_{i}=y_{i}^{(k)}\mbox{ if }i\notin J.\end{array}\nonumber 
\end{eqnarray}

\end{algorithm}
To illustrate the effectiveness of the step in Algorithm \ref{alg:SHQP-orig-Dykstra},
let us for now assume that $J=\{1,\dots,m\}$. Let $y_{1}^{(k),\circ},\dots,y_{m}^{(k),\circ}\in X$
be the values of $y_{i}^{(k)}$ before line 2 was performed in Algorithm
\ref{alg:SHQP-orig-Dykstra}, and let $y_{1}^{(k),+},\dots,y_{m}^{(k),+}$
be the respective values after line 2 was performed. Note that in
Dykstra's algorithm, line 5 ($x_{i}^{(k)}=P_{C_{i}}(z_{i}^{(k)})$)
is obtained by projecting onto the set $C_{i}$, and this projection
produces a supporting halfspace $H_{i}$ at $x_{i}^{(k)}$ so that
$H_{i}\supset C_{i}$. Moreover, the halfspace $H_{i}$ also satisfies
\begin{equation}
\delta^{*}(y_{i}^{(k),\circ},C_{i})=\delta^{*}(y_{i}^{(k),\circ},H_{i}).\label{eq:eq-in-proj}
\end{equation}
 The intersection $\cap_{i=1}^{m}H_{i}$ would be a polyhedral outer
approximate of $C=\cap_{i=1}^{m}C_{i}$. Since $H_{i}\supset C_{i}$,
we have $\delta^{*}(\cdot,H_{i})\geq\delta^{*}(\cdot,C_{i})$. We
therefore have 
\begin{eqnarray*}
 &  & \begin{array}{c}
f(y_{1}^{(k),\circ}+\cdots+y_{m}^{(k),\circ})+\underset{i=1}{\overset{m}{\sum}}\delta^{*}(y_{i}^{(k),\circ},C_{i})\end{array}\\
 & \overset{\eqref{eq:eq-in-proj}}{=} & \begin{array}{c}
f(y_{1}^{(k),\circ}+\cdots+y_{m}^{(k),\circ})+\underset{i=1}{\overset{m}{\sum}}\delta^{*}(y_{i}^{(k),\circ},H_{i})\end{array}\\
 & \overset{\eqref{eq:min-alg}}{\geq} & \begin{array}{c}
f(y_{1}^{(k),+}+\cdots+y_{m}^{(k),+})+\underset{i=1}{\overset{m}{\sum}}\delta^{*}(y_{i}^{(k),+},H_{i})\end{array}\\
 & \geq & \begin{array}{c}
f(y_{1}^{(k),+}+\cdots+y_{m}^{(k),+})+\underset{i=1}{\overset{m}{\sum}}\delta^{*}(y_{i}^{(k),+},C_{i}).\end{array}
\end{eqnarray*}
Thus performing the step in line 2 of Algorithm \ref{alg:SHQP-orig-Dykstra}
improves the dual objective $h(\cdot)$. Note that the minimization
problem \eqref{eq:min-alg} is the dual of the problem of projecting
a point onto the polyhedron $\cap_{i\in J}H_{i}$, which can be solved
effectively by quadratic programming if the number of halfspaces is
small. If the number of halfspaces is large, then line 1 of Algorithm
\ref{alg:SHQP-orig-Dykstra} gives the flexibility of solving a quadratic
program of manageable size instead. In general, $H_{i}$ can be chosen
to be the intersection of halfspaces such that \eqref{eq:eq-in-proj}
is valid.

If the boundary of $C_{i}$ is smooth, then $H_{i}$ approximates
$C_{i}$ at $x_{i}^{(k)}$, and the algorithm reduces to sequential
quadratic programming. This gives a reason why the additional step
in Algorithm \ref{alg:SHQP-orig-Dykstra} can be effective in practice. 

The step explained here gives a similar kind of enhancement to what
we saw earlier for the extended Dykstra's algorithm. It is clear to
see that the recurrence \eqref{eq:conv-rate-recurr} is not affected
by the additional step in Algorithm \ref{alg:SHQP-orig-Dykstra}.
Thus the convergence analysis given in Subsection \ref{sub:General-convergence-rate}
remains valid. But when $h(\cdot)$ does not have a minimizer, we
were not able to extend the Boyle-Dykstra Theorem (specifically, Lemma
\ref{lem:9.21} below) for the proof of global convergence of the
extension of Dykstra's algorithm using Algorithm \ref{alg:SHQP-orig-Dykstra}.

\section{\label{sec:Simult-Dykstra-algorithm}Simultaneous Dykstra's algorithm}

Recall that Dykstra's algorithm reduces the best approximation problem
to a series of projections. A variant of Dykstra's algorithm which
is more suitable for parallel computations is the simultaneous Dykstra's
algorithm proposed and studied in \cite{Iusem_DePierro_Han_1991}.
In this section, we give some details on deriving the simultaneous
Dykstra's algorithm, and then show how the principles described in
extending Dykstra's algorithm can be applied for the simultaneous
Dykstra's algorithm. 

Consider the BAP \eqref{eq:P-primal}, where we want to find the projection
of $d$ onto $C=\cap_{i=1}^{m}C_{i}$. We now recall the product space
formulation of \cite{Pierra84}. Define $\mathcal{C}\subset X^{m}$
and $\mathcal{D}\subset X^{m}$ by 
\begin{eqnarray}
\mathcal{C} & := & C_{1}\times\cdots\times C_{m}\label{eq:def-C-D}\\
\mbox{ and }\mathcal{D} & := & \{(x,\dots,x)\in X^{m}:x\in X\}.\nonumber 
\end{eqnarray}
Let $\lambda_{1},\dots,\lambda_{m}$ be $m$ positive numbers that
sum to one, and let the inner product $\langle\cdot,\cdot\rangle_{\bar{Q}}$
in $X^{m}$ be defined by 
\[
\langle(u_{1},\dots,u_{m}),(v_{1},\dots v_{m})\rangle_{\bar{Q}}:=\sum_{i=1}^{m}\lambda_{i}\langle u_{i},v_{i}\rangle.
\]
The projection of the point $(d,\dots,d)\in X^{m}$ onto $\mathcal{C}\cap\mathcal{D}$
can easily be seen to be $(P_{C}(d),\dots,P_{C}(d))$. Dykstra's algorithm
can be applied onto the product space formulation. This gives the
simultaneous Dykstra's algorithm proposed and studied in \cite{Iusem_DePierro_Han_1991},
which we present below. 
\begin{algorithm}
\cite{Iusem_DePierro_Han_1991}\label{alg:Simul-Dykstra}(Simultaneous
Dykstra's algorithm) Consider the BAP \eqref{eq:P-primal}. Let $y_{i}^{(0)}\in X$
be the starting dual variables for each component $i\in\{1,\dots,m\}$.
Set $x^{(0)}=d-\sum_{i=1}^{m}\lambda_{i}y_{i}^{(0)}$. 

01$\quad$For $k=1,2,\dots$ 

02$\quad$$\quad$For $i=1,2,\dots,m$ (Parallel projection) 

03$\quad$$\quad$$\quad$$z_{i}^{(k)}:=x^{(k-1)}+y_{i}^{(k-1)}$

04$\quad$$\quad$$\quad$$x_{i}^{(k)}=P_{C_{i}}(z_{i}^{(k)})$

05$\quad$$\quad$$\quad$$y_{i}^{(k)}=z_{i}^{(k)}-x_{i}^{(k)}$

06$\quad$$\quad$end for 

07$\quad$$\quad$$x^{(k)}=\sum_{i=1}^{m}\lambda_{i}x_{i}^{(k)}$

08$\quad$end for 
\end{algorithm}
We give a brief explanation of the simultaneous Dykstra's algorithm.
Lines 3 to 5 correspond to the projection onto $\mathcal{C}$. Line
7 corresponds to projection of $(x_{1}^{(k)},\dots,x_{m}^{(k)})$
onto $\mathcal{D}$, i.e., $(x^{(k)},\dots,x^{(k)})=P_{\mathcal{D}}(x_{1}^{(k)},\dots,x_{m}^{(k)})$.
The advantage of the simultaneous Dykstra's algorithm is that lines
3 to 5 can be performed in parallel. 

We now discuss the convergence rate of the simultaneous Dykstra's
algorithm. We saw in Section \ref{sec:alt-min-strategy} that the
regularity condition \eqref{eq:CQ1} is a sufficient condition for
$O(1/k)$ convergence. We now show that this regularity condition
holds for the original problem if and only if it holds for the product
space formulation.
\begin{prop}
(Equivalence of constraint qualification) Let $C_{i}$ be closed convex
sets for $i\in\{1,\dots,m\}$, and let $C=\cap_{i=1}^{m}C_{i}$. Let
$\mathcal{C}$ and $\mathcal{D}$ be as defined in \eqref{eq:def-C-D}.
At a point $x^{*}\in C$, the conditions 

\begin{equation}
\begin{array}{l}
\underset{i=1}{\overset{m}{\sum}}v_{i}=0\mbox{ and }v_{i}\in N_{C_{i}}(x^{*})\mbox{ for all }i\in\{1,\dots,m\}\\
\quad\mbox{ implies }v_{i}=0\mbox{ for all }i\in\{1,\dots,m\}
\end{array}\label{eq:CQ1-1}
\end{equation}
and 
\begin{eqnarray}
 &  & (v_{1},\dots,v_{m})+(w_{1},\dots,w_{m})=0,\,(v_{1},\dots,v_{m})\in N_{\mathcal{C}}(x^{*},\dots,x^{*})\label{eq:CQ2-2}\\
 &  & \mbox{ and }(w_{1},\dots,w_{m})\in N_{\mathcal{D}}(x^{*},\dots,x^{*})\nonumber \\
 &  & \mbox{implies }(v_{1},\dots,v_{m})=(w_{1},\dots,w_{m})=0\nonumber 
\end{eqnarray}
 are equivalent.\end{prop}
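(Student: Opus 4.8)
The plan is to establish the equivalence by computing the normal cones of the product sets $\mathcal{C}$ and $\mathcal{D}$ explicitly with respect to the weighted inner product $\langle\cdot,\cdot\rangle_{\bar Q}$, and then translating condition \eqref{eq:CQ2-2} into a statement purely about the sets $C_i$. First I would note that since $\mathcal{C}=C_1\times\cdots\times C_m$ is a product, its normal cone factors: $N_{\mathcal{C}}(x^*,\dots,x^*)=N_{C_1}(x^*)\times\cdots\times N_{C_m}(x^*)$. One subtlety is that the normal cone depends on the inner product, and here $X^m$ carries the weighted product $\langle u,v\rangle_{\bar Q}=\sum_i\lambda_i\langle u_i,v_i\rangle$ rather than the standard one; I would check that because the weighting is coordinatewise and positive, the normal cone to a product set is still the product of the individual normal cones (the positive scaling $\lambda_i$ does not change which directions are normal). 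So $(v_1,\dots,v_m)\in N_{\mathcal{C}}(x^*,\dots,x^*)$ is equivalent to $v_i\in N_{C_i}(x^*)$ for each $i$.

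Next I would compute $N_{\mathcal{D}}(x^*,\dots,x^*)$. The diagonal $\mathcal{D}$ is a linear subspace, so its normal cone equals its orthogonal complement in the $\langle\cdot,\cdot\rangle_{\bar Q}$ inner product. A vector $(w_1,\dots,w_m)$ is $\bar Q$-orthogonal to $\mathcal{D}$ iff $\sum_i\lambda_i\langle w_i,x\rangle=0$ for all $x\in X$, which says exactly that $\sum_{i=1}^m\lambda_i w_i=0$. Hence $N_{\mathcal{D}}(x^*,\dots,x^*)=\{(w_1,\dots,w_m):\sum_i\lambda_i w_i=0\}$.

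With both normal cones in hand, I would rewrite \eqref{eq:CQ2-2}. The hypothesis $(v_1,\dots,v_m)+(w_1,\dots,w_m)=0$ forces $w_i=-v_i$, so the conditions become: $v_i\in N_{C_i}(x^*)$ for all $i$, and $\sum_i\lambda_i(-v_i)=0$, i.e.\ $\sum_i\lambda_i v_i=0$; the conclusion is that all $v_i=0$. Thus \eqref{eq:CQ2-2} is precisely the statement that $\sum_i\lambda_i v_i=0$ with $v_i\in N_{C_i}(x^*)$ implies every $v_i=0$. The final step is to observe that this weighted version is equivalent to the unweighted condition \eqref{eq:CQ1-1}: since each $\lambda_i>0$ and $N_{C_i}(x^*)$ is a cone (so $v_i\in N_{C_i}(x^*)$ iff $\lambda_i v_i\in N_{C_i}(x^*)$), the substitution $\tilde v_i=\lambda_i v_i$ converts one condition into the other bijectively. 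This gives the desired equivalence.

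The only genuine obstacle is the bookkeeping around the weighted inner product—making sure the normal cone formulas for a product set and for a subspace are stated correctly in the $\langle\cdot,\cdot\rangle_{\bar Q}$ geometry rather than the standard one. Once those two formulas are pinned down, the rest is a direct algebraic translation using positivity of the $\lambda_i$ and the cone property of normal cones, so I expect no deeper difficulty.
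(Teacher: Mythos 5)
Your proof is correct and follows essentially the same route as the paper's: factor $N_{\mathcal{C}}$ as a product of the $N_{C_i}$, identify $N_{\mathcal{D}}$ with $\mathcal{D}^{\perp}$ in the $\bar{Q}$ inner product (giving $\sum_i\lambda_i w_i=0$, which is the paper's Proposition on $\mathcal{D}^{\perp}$), and reduce \eqref{eq:CQ2-2} to the weighted condition $\sum_i\lambda_i v_i=0$, $v_i\in N_{C_i}(x^*)$. Your final rescaling argument $\tilde v_i=\lambda_i v_i$ (valid since the $N_{C_i}(x^*)$ are cones and $\lambda_i>0$) simply makes explicit the step the paper dismisses as ``easily seen,'' so there is no substantive difference.
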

\begin{proof}
Note that $(v_{1},\dots,v_{m})\in N_{\mathcal{C}}(x^{*},\dots,x^{*})$
if and only if $v_{i}\in N_{C_{i}}(x^{*})$ for all $i$. Next, since
$\mathcal{D}$ is a linear subspace, we have $(w_{1},\dots,w_{m})\in N_{\mathcal{D}}(x^{*},\dots,x^{*})$
if and only if $(w_{1},\dots,w_{m})\in\mathcal{D}^{\perp}$. Proposition
\ref{prop:w-in-D-perp-formula} gives the equivalent condition $\sum\lambda_{i}w_{i}=0$.
So in other words, 
\begin{eqnarray*}
 &  & (v_{1},\dots,v_{m})+(w_{1},\dots,w_{m})=0,\,(v_{1},\dots,v_{m})\in N_{\mathcal{C}}(x^{*},\dots,x^{*})\\
 &  & \mbox{ and }(w_{1},\dots,w_{m})\in N_{\mathcal{D}}(x^{*},\dots,x^{*})
\end{eqnarray*}
 is equivalent to 
\[
\begin{array}{c}
v_{i}\in N_{C_{i}}(x^{*})\mbox{ and }w_{i}=-v_{i}\mbox{ for all }i\in\{1,\dots,m\}\mbox{, and }\underset{i=1}{\overset{m}{\sum}}\lambda_{i}v_{i}=0.\end{array}
\]
Conditions \eqref{eq:CQ1-1} and \eqref{eq:CQ2-2} are now easily
seen to be equivalent. 
\end{proof}
As is well known in the study of Dykstra's algorithm, no correction
vectors for \emph{$\mathcal{D}$ }are necessary since $\mathcal{D}$
is an affine space. But we need to elaborate on the correction vector
to $\mathcal{D}$ before we show the derivation of $x^{(0)}$. Let
this correction vector be $\tilde{w}^{(k)}=(w_{1}^{(k)},\dots,w_{m}^{(k)})$.
We have $\tilde{w}^{(k)}\in N_{\mathcal{D}}(x^{(k)},\dots,x^{(k)})$.
But since $\mathcal{D}$ is a linear subspace, we have $\tilde{w}^{(k)}\in\mathcal{D}^{\perp}$.
We have the following easy result. 
\begin{prop}
\label{prop:w-in-D-perp-formula}Let $\tilde{w}=(w_{1},\dots,w_{m})$
be a vector in $X^{m}$. Then $\tilde{w}\in\mathcal{D}^{\perp}$ if
and only if $\sum\lambda_{i}w_{i}=0$.\end{prop}
\begin{proof}
This follows easily from the following chain: 
\[
\begin{array}{rl}
 & \tilde{w}\in\mathcal{D}^{\perp}\\
\iff & \langle\tilde{w},v\rangle=0\mbox{ for all }v\in\mathcal{D}\\
\iff & \langle\sum\lambda_{i}w_{i},v\rangle=0\mbox{ for all }v\in X\\
\iff & \sum\lambda_{i}w_{i}=0.
\end{array}
\]

\end{proof}
Let $\tilde{y}^{(k)}=(y_{1}^{(k)},\dots,y_{m}^{(k)})$. The default
starting vector for the simultaneous Dykstra's algorithm in \cite{Iusem_DePierro_Han_1991}
is $\tilde{y}^{(0)}=0\in X^{m}$, but we can warmstart Dykstra's algorithm
as explained in Subsection \ref{sub:Warmstart-Dykstra}. We now show
that $x^{(0)}=d-\sum\lambda_{i}y_{i}^{(0)}$ is indeed the formula
to warmstart the simultaneous Dykstra's algorithm. 
\begin{prop}
(Formula for $x^{(0)}$) In Algorithm \ref{alg:Simul-Dykstra}, for
the starting dual vector $\tilde{y}^{(k)}=(y_{1}^{(k)},\dots,y_{m}^{(k)})\in X^{m}$,
the starting iterate for $x^{(0)}$ is $x^{(0)}=d-\sum\lambda_{i}y_{i}^{(0)}$.\end{prop}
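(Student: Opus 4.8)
The plan is to identify the simultaneous Dykstra's algorithm (Algorithm \ref{alg:Simul-Dykstra}) with the product-space Dykstra's algorithm (Algorithm \ref{alg:Dykstra}) applied to the two sets $\mathcal{C}$ and $\mathcal{D}$, in that cyclic order, under the inner product $\langle\cdot,\cdot\rangle_{\bar Q}$, and then to read off the initialization. In this correspondence the single iterate $x^{(k)}$ of the simultaneous algorithm encodes the product-space iterate $(x^{(k)},\dots,x^{(k)})$ sitting at the position after projecting onto the last set $\mathcal{D}$; the starting correction vectors are $\tilde y^{(0)}=(y_1^{(0)},\dots,y_m^{(0)})$ for $\mathcal{C}$ (since $P_{\mathcal{C}}$ and its residual act blockwise, these are exactly the $y_i^{(0)}$) and the vector $\tilde w^{(0)}$ for $\mathcal{D}$ discussed just before the statement. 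By the warmstart initialization rule of Algorithm \ref{alg:Dykstra} (base point minus the sum of all correction vectors), I would first record the identity
\[
(x^{(0)},\dots,x^{(0)})=(d,\dots,d)-\tilde y^{(0)}-\tilde w^{(0)}.
\]

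Next I would exploit two structural facts about this identity. On the one hand, $(x^{(0)},\dots,x^{(0)})\in\mathcal{D}$ because all its blocks agree. On the other hand, since $\mathcal{D}$ is a linear subspace its correction vector satisfies $\tilde w^{(0)}\in\mathcal{D}^{\perp}$, as noted above the statement and in Proposition \ref{prop:w-in-D-perp-formula}. Rearranging the displayed identity gives
\[
(d,\dots,d)-\tilde y^{(0)}=(x^{(0)},\dots,x^{(0)})+\tilde w^{(0)},
\]
which exhibits $(d,\dots,d)-\tilde y^{(0)}$ as a sum of a vector in $\mathcal{D}$ and a vector in $\mathcal{D}^{\perp}$. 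By uniqueness of the orthogonal decomposition this is precisely the decomposition induced by $\mathcal{D}$, so $(x^{(0)},\dots,x^{(0)})=P_{\mathcal{D}}\big((d,\dots,d)-\tilde y^{(0)}\big)$.

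Finally I would evaluate this projection. The projection of $(u_1,\dots,u_m)$ onto $\mathcal{D}$ under $\langle\cdot,\cdot\rangle_{\bar Q}$ minimizes $\sum_{i=1}^m\lambda_i\|u_i-x\|^2$ over $x\in X$, whose first-order condition yields the weighted average $x=\sum_{i=1}^m\lambda_i u_i$ (equivalently, this is the content of Proposition \ref{prop:w-in-D-perp-formula} applied to the residual $\tilde w^{(0)}$). Applying this with $u_i=d-y_i^{(0)}$ and using $\sum_{i=1}^m\lambda_i=1$ gives $x^{(0)}=\sum_{i=1}^m\lambda_i(d-y_i^{(0)})=d-\sum_{i=1}^m\lambda_i y_i^{(0)}$, which is the claim. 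The one place that needs care, and the main obstacle, is the bookkeeping on $\tilde w^{(0)}$: it is tempting but wrong to set it to zero, and the whole formula hinges on its being exactly the $\mathcal{D}^{\perp}$-component that drags the initial iterate back onto $\mathcal{D}$. Once the orthogonality $\tilde w^{(0)}\in\mathcal{D}^{\perp}$ is invoked, the remaining averaging step is routine.
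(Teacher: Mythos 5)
Your proof is correct and follows essentially the same route as the paper's: both rest on the Dykstra bookkeeping identity $(d,\dots,d)-(x^{(0)},\dots,x^{(0)})=\tilde{y}^{(0)}+\tilde{w}^{(0)}$ together with Proposition \ref{prop:w-in-D-perp-formula} (i.e.\ $\tilde{w}^{(0)}\in\mathcal{D}^{\perp}$, equivalently $\sum\lambda_{i}w_{i}^{(0)}=0$) and the normalization $\sum\lambda_{i}=1$. The only difference is presentational: the paper applies $\sum\lambda_{i}w_{i}^{(0)}=0$ componentwise and solves the resulting scalar equation directly, whereas you package the identical computation as uniqueness of the orthogonal decomposition $\mathcal{D}\oplus\mathcal{D}^{\perp}$ followed by evaluation of $P_{\mathcal{D}}$ as the $\lambda$-weighted average.
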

\begin{proof}
Let $\tilde{w}^{(k)}=(w_{1}^{(k)},\dots,w_{m}^{(k)})$ be the correction
vector corresponding to $\mathcal{D}$. The iterates $(x^{(k)},\dots,x^{(k)})\in X^{m}$
lie in $\mathcal{D}$ for all $k$, and $(d,\dots,d)\in\mathcal{D}$.
From our study of Dykstra's algorithm earlier, we have 
\[
(w_{1}^{(k)},\dots,w_{m}^{(k)})=(d,\dots,d)-(x^{(k)},\dots,x^{(k)})-(y_{1}^{(k)},\dots,y_{m}^{(k)}).
\]
Moreover, we have $\sum\lambda_{i}w_{i}^{(k)}=0$ from Proposition
\ref{prop:w-in-D-perp-formula}, so $\sum\lambda_{i}(d-x^{(k)}-y_{i}^{(k)})=0$.
Together with the fact that $\sum\lambda_{i}=1$, we get the needed
formula for $x^{(0)}$.
\end{proof}
We now look at how to improve Algorithm \ref{alg:Simul-Dykstra}.
Line 7 can be improved by projecting $(x_{1}^{(k)},\dots,x_{m}^{(k)})$
onto a set better than $\mathcal{D}$. Recall that line 4 produces
supporting halfspaces of the set $C_{i}$. Consider the set $C_{m+1}^{k}$
defined as the intersection of the supporting halfspaces produced
in line 4, and let $\mathcal{C}^{k}\subset X^{m}$ be defined by $\mathcal{C}^{k}=C_{m+1}^{k}\times\cdots\times C_{m+1}^{k}$
($m$ copies). We can add the set $\mathcal{C}^{k}$ to play the role
of $C_{m+1}^{k}$ in the extended Dykstra's algorithm (Algorithm \ref{alg:Extended-Dykstra})
to enhance the algorithm.

\subsection{A two-level Dykstra's algorithm}

If we want to apply the SHQP strategy to enhance the simultaneous
Dykstra's algorithm, then we might want to cut up the problem into
smaller blocks so that the quadratic programs formed are defined by
a manageable number of halfspaces. It is reasonable to assume that
information about the sets $C_{i}$ communicate upwards from the leaves
to the root of a tree (in the sense of graph theory). We illustrate
with an example with $m=4$ where we break down the size of the quadratic
programs to be at most 2. Let the sets $\mathcal{D}_{1}$ and $\mathcal{D}_{2}$
be defined by 
\begin{eqnarray*}
\mathcal{D}_{1} & = & \{(x_{1},x_{2},x_{3},x_{4})\in X^{4}:x_{1}=x_{2}\}\\
\mbox{ and }\mathcal{D}_{2} & = & \{(x_{1},x_{2},x_{3},x_{4})\in X^{4}:x_{3}=x_{4}\}.
\end{eqnarray*}
We present a two level Dykstra's algorithm. 
\begin{algorithm}
\label{thm:Two-level-Dykstra's}(Two level Dykstra's algorithm) Consider
the BAP \eqref{eq:P-primal} where $m=4$. Let $y_{i}^{(0)}\in X$
be the starting dual variables for each component $i\in\{1,\dots,4\}$.
Set $x^{(0)}=d-\sum_{i=1}^{4}\lambda_{i}y_{i}^{(0)}$. 

01$\quad$For $k=1,2,\dots$

02$\quad$$\quad$For $i\in\{1,2,3,4\}$ 

03$\quad$$\quad$$\quad$$z_{i}^{(k)}:=x^{(k-1)}+y_{i}^{(k-1)}$

04$\quad$$\quad$$\quad$$x_{i}^{(k)}=P_{C_{i}}(z_{i}^{(k)})$

05$\quad$$\quad$$\quad$$y_{i}^{(k)}=z_{i}^{(k)}-x_{i}^{(k)}$

06$\quad$$\quad$end for

07$\quad$$\quad$$x_{(1,2)}^{(k)}=\frac{\lambda_{1}}{\lambda_{1}+\lambda_{2}}x_{1}^{(k)}+\frac{\lambda_{2}}{\lambda_{1}+\lambda_{2}}x_{2}^{(k)}$

08$\quad$$\quad$\textup{$x_{(3,4)}^{(k)}=\frac{\lambda_{3}}{\lambda_{3}+\lambda_{4}}x_{3}^{(k)}+\frac{\lambda_{4}}{\lambda_{3}+\lambda_{4}}x_{4}^{(k)}$}

09$\quad$$\quad$\textup{$x^{(k)}=(\lambda_{1}+\lambda_{2})x_{(1,2)}^{(k)}+(\lambda_{3}+\lambda_{4})x_{(3,4)}^{(k)}$}

10$\quad$end for
\end{algorithm}
Lines 2 to 6 describe the operation involved in projecting onto $\mathcal{C}$,
which is not different from the simultaneous Dykstra's algorithm (Algorithm
\ref{alg:Simul-Dykstra}). Line 7 describes the operation in projecting
onto $\mathcal{D}_{1}$, line 8 describes the operation in projecting
onto $\mathcal{D}_{2}$, and line 9 describes the operation in projecting
onto $\mathcal{D}$. 

The agent that collects information on $x_{1}^{(k)}$ and $x_{2}^{(k)}$
to obtain $x_{(1,2)}^{(k)}$ can also collect the halfspaces generated
by the projection operation used to obtain $x_{1}^{(k)}$ and $x_{2}^{(k)}$.
We can make use of these halfspaces to form a superset of $C$ that
plays the role of $C_{m+1}^{k}$ in the extended Dykstra's algorithm
(Algorithm \ref{alg:Extended-Dykstra}). In other words, the operations
in lines 8 to 12 of Algorithm \ref{alg:Extended-Dykstra} can be inserted
between lines 7 and 8 of Algorithm \ref{thm:Two-level-Dykstra's}.
We can also insert these same lines between lines 8 and 9 and between
lines 9 and 10 to enhance Algorithm \ref{thm:Two-level-Dykstra's}.
It is now easy to extend the principles highlighted here for problems
involving $m>4$ sets and with more than 2 levels.

\section{\label{sec:greedy-app}Using the APG for $(D')$}

In this section, we depart from the dual alternating minimization
strategy treated in the rest of the paper, and discuss using the accelerated
proximal gradient (APG) algorithm to solve $(D')$ in \eqref{eq:D-prime}
in order to get a $O(1/k^{2})$ convergence rate. We remark that the
APG can be augmented by the strategy described in Subsection \ref{sub:SHQP-Dykstra}.

We recall the APG as presented in \cite[Section 3]{Tseng_APG_2008},
which traces its roots to Nesterov's second optimal method \cite{Nesterov_1988_2nd_opt_method}.
We decide that it is best to adopt the notation of \cite{Tseng_APG_2008}
even though it conflicts with some of the notation we have used in
the rest of the paper. 
\begin{algorithm}
\label{alg:Tseng-alg-1}\cite[Algorithm 1]{Tseng_APG_2008} Consider
the problem of minimizing 
\[
h(x)=f(x)+P(x),
\]
where $f:X\to\mathbb{R}$ is a smooth convex function whose gradient
$\nabla f:X\to X$ is Lipschitz with constant $L$, and $P:X\to\mathbb{R}$
is a (not necessarily smooth) convex function. For each $y\in X$,
define $l_{f}(\cdot;y):X\to\mathbb{R}$ (a linearization of $h(\cdot)$
at $y$) by 
\[
l_{f}(x;y)=f(y)+\langle\nabla f(y),x-y\rangle+P(x).
\]
Choose $\theta_{0}\in(0,1]$, $x_{0}$, $z_{0}\in\dom(P)$. $k\leftarrow0$.
Go to 1.
\begin{enumerate}
\item Choose a nonempty closed convex set $X_{k}\subset X$ with $X_{k}\cap\dom(P)\neq\emptyset$.
Let 
\begin{eqnarray}
y_{k} & = & \begin{array}{c}
(1-\theta_{k})x_{k}+\theta_{k}z_{k}\end{array},\label{eq:Tseng-11}\\
z_{k+1} & = & \begin{array}{c}
\arg\min_{x\in X_{k}}\{l_{f}(x;y_{k})+\frac{\theta_{k}L}{2}\|x-z_{k}\|^{2}\},\end{array}\label{eq:Tseng-12}\\
\hat{x}_{k+1} & = & \begin{array}{c}
(1-\theta_{k})x_{k}+\theta_{k}z_{k+1}.\end{array}\label{eq:Tseng-13}
\end{eqnarray}
Choose $x_{k+1}$ such that 
\begin{equation}
\begin{array}{c}
h(x_{k+1})\leq l_{f}(\hat{x}_{k+1};y_{k})+\frac{L}{2}\|\hat{x}_{k+1}-y_{k}\|^{2}.\end{array}\label{eq:Tseng-14}
\end{equation}
Choose $\theta_{k+1}\in(0,1]$ satisfying 
\begin{equation}
\begin{array}{c}
\frac{1-\theta_{k+1}}{\theta_{k+1}^{2}}\leq\frac{1}{\theta_{k}^{2}}.\end{array}\label{eq:Tseng-15}
\end{equation}
$k\leftarrow k+1$, and go to 1. 
\end{enumerate}
\end{algorithm}
The following is the convergence result of Algorithm \ref{alg:Tseng-alg-1}.
We simplify their result by taking $X_{k}=X$ for all $k$. 
\begin{thm}
\label{thm:Tseng-thm-1a}\cite[Corollary 1(a)]{Tseng_APG_2008} Let
$\{(x_{k},y_{k},z_{k},\theta_{k},X_{k})\}$ be generated by Algorithm
\ref{alg:Tseng-alg-1} with $\theta_{0}=1$. Fix any $\epsilon>0$.
Suppose $\theta_{k}\leq\frac{2}{k+2}$ (which is the case when $\theta_{0}=1$
and $\theta_{k+1}$ is determined from $\theta_{k}$ by setting \eqref{eq:Tseng-15}
to an equation), and $X_{k}=X$ for all $k$. Then for any $x\in\dom(P)$
with $h(x)\leq\inf(h)+\epsilon$, we have 
\[
\min_{i=0,1,\dots,k+1}\{h(x_{i})\}\leq h(x)+\epsilon\mbox{ whenever }\begin{array}{c}
k\geq\sqrt{\frac{4L}{\epsilon}}\|x-z_{0}\|-2.\end{array}
\]

\end{thm}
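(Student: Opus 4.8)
The plan is to run the standard potential-function (Lyapunov) argument for accelerated proximal gradient methods, specialized to $X_k=X$ and $\theta_0=1$. Fix a comparison point $x\in\dom(P)$ with $h(x)\le\inf(h)+\epsilon$. The engine of the proof is a one-step inequality for the potential
\[
\Phi_k:=\frac{1-\theta_k}{\theta_k^2}\big(h(x_k)-h(x)\big)+\frac{L}{2}\|x-z_k\|^2,
\]
which I will show is non-increasing in $k$. Because $\theta_0=1$ annihilates the coefficient of the first summand, it starts cleanly from $\Phi_0=\frac{L}{2}\|x-z_0\|^2$.

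First I would extract the two algorithmic inequalities. Since $X_k=X$ and $l_f(\cdot;y_k)$ is convex, the map $x\mapsto l_f(x;y_k)+\frac{\theta_k L}{2}\|x-z_k\|^2$ is $\theta_k L$-strongly convex with minimizer $z_{k+1}$ by \eqref{eq:Tseng-12}; evaluating its strong-convexity estimate at an arbitrary $x$ gives
\[
l_f(z_{k+1};y_k)+\tfrac{\theta_k L}{2}\|z_{k+1}-z_k\|^2+\tfrac{\theta_k L}{2}\|x-z_{k+1}\|^2\le l_f(x;y_k)+\tfrac{\theta_k L}{2}\|x-z_k\|^2 .
\]
Next, using \eqref{eq:Tseng-14}, the identity $\hat x_{k+1}-y_k=\theta_k(z_{k+1}-z_k)$ obtained by subtracting \eqref{eq:Tseng-11} from \eqref{eq:Tseng-13}, and convexity of $l_f(\cdot;y_k)$ applied to $\hat x_{k+1}=(1-\theta_k)x_k+\theta_k z_{k+1}$, I get
\[
h(x_{k+1})\le(1-\theta_k)\,l_f(x_k;y_k)+\theta_k\,l_f(z_{k+1};y_k)+\tfrac{L\theta_k^2}{2}\|z_{k+1}-z_k\|^2 .
\]

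Then I would combine the two displays, scaling the first by $\theta_k$ so the $\|z_{k+1}-z_k\|^2$ terms cancel, and replacing $l_f(x_k;y_k)\le h(x_k)$ and $l_f(x;y_k)\le h(x)$ (both instances of the gradient inequality $f(y_k)+\langle\nabla f(y_k),\cdot-y_k\rangle\le f(\cdot)$). Dividing by $\theta_k^2$ yields the key recursion
\[
\frac{1}{\theta_k^2}\big(h(x_{k+1})-h(x)\big)+\frac{L}{2}\|x-z_{k+1}\|^2\le\frac{1-\theta_k}{\theta_k^2}\big(h(x_k)-h(x)\big)+\frac{L}{2}\|x-z_k\|^2=\Phi_k .
\]
Invoking \eqref{eq:Tseng-15} in the form $\frac{1-\theta_{k+1}}{\theta_{k+1}^2}\le\frac{1}{\theta_k^2}$ together with $h(x_{k+1})-h(x)\ge0$, the left-hand side dominates $\Phi_{k+1}$, so $\Phi_{k+1}\le\Phi_k$; telescoping back to $\Phi_0$ gives $h(x_{k+1})-h(x)\le\frac{\theta_k^2 L}{2}\|x-z_0\|^2$. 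Substituting $\theta_k\le\frac{2}{k+2}$ produces the $O(1/k^2)$ bound $h(x_{k+1})-h(x)\le\frac{2L}{(k+2)^2}\|x-z_0\|^2$, and the stated threshold $k\ge\sqrt{4L/\epsilon}\,\|x-z_0\|-2$ is exactly what forces the right-hand side below $\epsilon$.

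The one genuine subtlety — and the step I expect to need the most care — is the monotonicity $\Phi_{k+1}\le\Phi_k$, since it relies on $h(x_{k+1})-h(x)\ge0$, which need not hold for the merely $\epsilon$-optimal comparison point $x$. I would dispatch this with the usual dichotomy: if $h(x_i)<h(x)$ ever occurs for some $i\le k+1$, then $\min_{i}h(x_i)<h(x)\le h(x)+\epsilon$ and the claim is immediate; otherwise every term is nonnegative, the telescoping is legitimate, and the quantitative bound applies to $h(x_{k+1})$. This dichotomy is also precisely why the conclusion is phrased with $\min_{i=0,\dots,k+1}\{h(x_i)\}$ rather than with $h(x_{k+1})$ alone.
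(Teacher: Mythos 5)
Your proof is correct, but it is worth pointing out that the paper itself contains no proof of this statement: Theorem \ref{thm:Tseng-thm-1a} is imported verbatim from \cite[Corollary 1(a)]{Tseng_APG_2008}, and the only proof-related content in the paper is the one-line remark that the modified acceptance rule \eqref{eq:Tseng-14} (an inequality, rather than Tseng's original prescription for $x_{k+1}$) still makes Tseng's inequality (23) valid. What you have written is a self-contained reconstruction of precisely that argument: your key recursion $\frac{1}{\theta_k^2}\bigl(h(x_{k+1})-h(x)\bigr)+\frac{L}{2}\|x-z_{k+1}\|^2\leq\Phi_k$ is Tseng's inequality (23) specialized to $X_k=X$, obtained from the strong-convexity estimate at the minimizer $z_{k+1}$, the identity $\hat{x}_{k+1}-y_k=\theta_k(z_{k+1}-z_k)$, convexity of $l_f(\cdot;y_k)$, and the gradient inequality $l_f(\cdot;y_k)\leq h(\cdot)$ --- and since your derivation of it uses only the inequality in \eqref{eq:Tseng-14}, it simultaneously substantiates the paper's remark. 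Your closing dichotomy (either some $h(x_i)<h(x)$, or all gaps are nonnegative and the telescoping is legitimate) is also the right way to handle the merely $\epsilon$-optimal comparison point, which matters here because $\inf(h)$ need not be attained (e.g.\ for the dual $(D')$ of the BAP), and it correctly explains the $\min_{i=0,\dots,k+1}$ in the conclusion. One small quantitative remark: your bound $h(x_{k+1})-h(x)\leq\frac{2L}{(k+2)^2}\|x-z_0\|^2$ means the stated threshold $k\geq\sqrt{4L/\epsilon}\,\|x-z_0\|-2$ forces the gap below $\epsilon/2$, not just $\epsilon$; the constant $4L$ is conservative (with your estimate $2L$ would suffice), so ``exactly'' is a slight overstatement, but the implication claimed in the theorem certainly holds.
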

Even though the line \eqref{eq:Tseng-14} is different from that in
\cite[(14)]{Tseng_APG_2008}, it is easy to check that the inequality
\cite[(23)]{Tseng_APG_2008} remains valid with this change.

Theorem \ref{thm:Tseng-thm-1a} shows that the infimum of $\{h(x_{k})\}_{k}$
produced by Algorithm \ref{alg:Tseng-alg-1} would converge to the
infimum of $h(\cdot)$. Furthermore, if a minimizer of $h(\cdot)$
exists, then the convergence rate of $\{h(x_{k})-\inf(h)\}_{k}$ is
of $O(1/k^{2})$. 

For the BAP \eqref{eq:P-primal} of projecting a point onto the intersection
of $m$ sets, the function $h(\cdot)$ was described in \eqref{eq:D-prime}.
For $\tilde{y}\in X^{m}$, the mapping 
\begin{equation}
\tilde{y}\mapsto\|d-\sum y_{i}\|^{2}=f(y_{1}+\cdots+y_{m})\label{eq:f-map}
\end{equation}
 has Hessian 
\[
\left(\begin{array}{cccc}
I & I & \cdots & I\\
I & I &  & I\\
\vdots &  & \ddots & \vdots\\
I & I & \cdots & I
\end{array}\right)
\]
(i.e., there are $m^{2}$ blocks in an $m\times m$ block square matrix),
and the gradient of the map in \eqref{eq:f-map} is Lipschitz with
constant $L=m$. The step \eqref{eq:Tseng-12} can now be easily carried
out using Proposition \ref{prop:easy-Dykstra-facts}(5) to obtain
all $m$ components of the minimizer $z_{i+1}$. We can use the strategy
described in Subsection \ref{sub:SHQP-Dykstra} to get a better iterate
$x_{k+1}$ satisfying \eqref{eq:Tseng-14} than $\hat{x}_{k+1}$.

\section{Conclusion}

In this paper, we showed ways to incorporate the SHQP heuristic to
improve Dykstra's algorithm. For the case when $C_{i}$ are hyperplanes,
the numerical experiments in \cite{Pang_subBAP} shows the effectiveness
of the strategies explained in this paper. We defer further numerical
experiments to future work.

\appendix

\section{\label{sec:pf-extended-Dykstra}Proof of convergence of Algorithm
\ref{alg:Extended-Dykstra}}

In this appendix, we present the proof of convergence of Algorithm
\ref{alg:Extended-Dykstra}, the extended Dykstra's algorithm. We
already saw that if $H_{m+1}^{k}=C_{m+1}^{k}=X$ for all $k\geq0$,
then Algorithm \ref{alg:Extended-Dykstra} reduces to the original
Dykstra's algorithm. Apart from Theorem \ref{thm:ppty-extended-Dykstra},
our proof is mostly the same as the Boyle-Dykstra theorem \cite{BD86}
as presented in \cite{Deustch01}. Note that the proof here also includes
the warmstart case. 

Throughout this section, we follow the notation of Algorithm \ref{alg:Extended-Dykstra}.
We need to follow the notation in \cite{Deustch01} and define the
sequences $\{e_{i}\}_{i=-m}^{\infty}$ and $\{\tilde{x}_{i}\}_{i=0}^{\infty}$
by 
\begin{eqnarray}
e_{(m+1)(k-1)+i} & = & y_{i}^{(k)}\label{eq:def-e}\\
\tilde{x}_{(m+1)(k-1)+i} & = & x_{i}^{(k)}.\label{eq:def-x}
\end{eqnarray}
The statement of Lemma \ref{lem:9-23} makes the new notation more
natural. We denote $[i]$ to be the integer in $\{1,\dots,m+1\}$
such that $m+1$ divides $i-[i]$. 
\begin{lem}
\label{lem:9.17}In Algorithm \ref{alg:Extended-Dykstra}, for each
$i\geq1$, such that $[i]\in\{1,\dots,m\}$. 
\begin{equation}
\delta^{*}(e_{i},C_{[i]}-y)=\left\langle \tilde{x}_{i}-y,e_{i}\right\rangle \geq0\mbox{ for all }y\in C_{[i]}.\label{eq:9.17.1}
\end{equation}
Furthermore, if $[i]=m+1$, then 
\begin{equation}
\delta^{*}(e_{i},C_{m+1}^{i/(m+1)}-y)=\left\langle \tilde{x}_{i}-y,e_{i}\right\rangle \geq0\mbox{ for all }y\in C.\label{eq:9.17.2}
\end{equation}
\end{lem}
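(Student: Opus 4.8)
The plan is to recognize Lemma \ref{lem:9.17} as nothing more than the elementary variational characterization of a projection, rewritten in the reindexed notation of \eqref{eq:def-e}--\eqref{eq:def-x}. The single fact I would lean on throughout is that for a closed convex set $D$ and $\bar x = P_D(z)$, the residual $z - \bar x$ lies in the normal cone $N_D(\bar x)$; equivalently $\langle z-\bar x,\, x-\bar x\rangle \le 0$ for all $x \in D$.

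First I would treat the case $[i]\in\{1,\dots,m\}$. Writing $i = (m+1)(k-1) + [i]$, lines 5--6 of Algorithm \ref{alg:Extended-Dykstra} give $\tilde x_i = x_{[i]}^{(k)} = P_{C_{[i]}}(z_{[i]}^{(k)})$ and $e_i = y_{[i]}^{(k)} = z_{[i]}^{(k)} - x_{[i]}^{(k)}$, so $e_i$ is exactly the projection residual and hence $e_i \in N_{C_{[i]}}(\tilde x_i)$. This says $\langle e_i, x\rangle \le \langle e_i, \tilde x_i\rangle$ for every $x \in C_{[i]}$, with equality at $x = \tilde x_i \in C_{[i]}$; therefore $\delta^*(e_i, C_{[i]}) = \langle e_i, \tilde x_i\rangle$. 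Combining this with the translation identity $\delta^*(e_i, C_{[i]}-y) = \delta^*(e_i, C_{[i]}) - \langle e_i, y\rangle$ yields the claimed equality $\delta^*(e_i, C_{[i]}-y) = \langle \tilde x_i - y, e_i\rangle$ for every $y$. The nonnegativity for $y \in C_{[i]}$ is then immediate, since it is just the normal-cone inequality $\langle e_i, \tilde x_i - y\rangle = -\langle e_i, y - \tilde x_i\rangle \ge 0$.

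The case $[i] = m+1$ runs along identical lines once the right set is substituted. Here $i = (m+1)k$ with $k = i/(m+1)$, and lines 10--11 give $\tilde x_i = x_{m+1}^{(k)} = P_{C_{m+1}^{k}}(z_{m+1}^{(k)})$ with residual $e_i = y_{m+1}^{(k)} \in N_{C_{m+1}^{k}}(\tilde x_i)$. The same support-function computation as above, now carried out over $C_{m+1}^{i/(m+1)} = C_{m+1}^{k}$, delivers the equality $\delta^*(e_i, C_{m+1}^{i/(m+1)} - y) = \langle \tilde x_i - y, e_i\rangle$. The only genuinely new ingredient is that the nonnegativity is asserted for $y \in C$ rather than for $y$ in the projecting set: here I would invoke the design constraint from line 8, namely $C \subset C_{m+1}^{k}$, so that any $y \in C$ also lies in $C_{m+1}^{k}$, whereupon the normal-cone inequality applies and gives $\langle \tilde x_i - y, e_i\rangle \ge 0$.

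There is no real obstacle beyond bookkeeping: the substance is the single projection identity, and the only points requiring care are keeping the index translation $i \leftrightarrow (k,[i])$ straight and remembering that the $\ge 0$ conclusion in the $[i]=m+1$ case rests on the containment $C \subset C_{m+1}^{k}$ rather than on $y$ lying in the projecting set itself.
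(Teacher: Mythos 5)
Your proof is correct and takes essentially the same route as the paper's: both rest on the single fact that the projection residual $e_{i}$ lies in the normal cone at $\tilde{x}_{i}$ (the variational characterization of projections), with the support-function equality coming from the definition of $\delta^{*}$ and the nonnegativity in the $[i]=m+1$ case coming from the containment $C\subset C_{m+1}^{k}$ imposed in line 8. You merely spell out details the paper dismisses as ``straightforward'' (the translation identity for $\delta^{*}$ and the attainment of the supremum at $\tilde{x}_{i}$), which is a fair expansion, not a different argument.
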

\begin{proof}
The proof of inequality \eqref{eq:9.17.1} is exactly the same as
\cite[Lemma 9.17]{Deustch01}, but our statement is now only valid
for all $n\geq1$. We have 
\begin{eqnarray*}
 &  & \left\langle \tilde{x}_{i}-y,e_{i}\right\rangle \\
 & = & \left\langle P_{C_{[i]}}(\tilde{x}_{i-1}+e_{i-(m+1)})-y,\tilde{x}_{i-1}+e_{i-(m+1)}-P_{K_{[i]}}(\tilde{x}_{i-1}+e_{i-(m+1)})\right\rangle \geq0,
\end{eqnarray*}
where the inequality is an immediate consequence from the properties
of projections. The second inequality in \eqref{eq:9.17.2} is also
clear. The equations in both \eqref{eq:9.17.1} and \eqref{eq:9.17.2}
are straightforward from the definition of $\delta^{*}(\cdot,\cdot)$.\end{proof}
\begin{lem}
\label{lem:9-18}In Algorithm \ref{alg:Extended-Dykstra}, for each
$i\geq0$,
\begin{equation}
d-\tilde{x}_{i}=e_{i-m}+e_{i-(m-1)}+\cdots+e_{i-1}+e_{i}.\label{eq:9.18.1}
\end{equation}
\end{lem}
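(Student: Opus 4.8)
The plan is to prove \eqref{eq:9.18.1} by induction on $i$, following the pattern of \cite[Lemma 9.18]{Deustch01} but adapted to the $(m+1)$-set, warmstart setting. The engine of the induction will be a single one-step recurrence obtained by comparing the two expressions for $z_i^{(k)}$ in Algorithm \ref{alg:Extended-Dykstra}. Line 4 (and line 9 for the index $m+1$) gives $z_i^{(k)}=x_{i-1}^{(k)}+y_i^{(k-1)}$, while line 6 (and line 11) rearranges to $z_i^{(k)}=x_i^{(k)}+y_i^{(k)}$; hence $x_i^{(k)}+y_i^{(k)}=x_{i-1}^{(k)}+y_i^{(k-1)}$. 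Writing $n=(m+1)(k-1)+i$ and translating through \eqref{eq:def-e} and \eqref{eq:def-x}, this becomes
\[
\tilde{x}_{n-1}-\tilde{x}_{n}=e_{n}-e_{n-(m+1)},
\]
valid for every $n\geq 1$; the wrap-around case $i=1$ uses line 2, namely $x_0^{(k)}=x_{m+1}^{(k-1)}$, which is exactly $\tilde{x}_{n-1}$ under the identification.

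For the base case $i=0$, I would unwind the index conventions: $\tilde{x}_0=x_{m+1}^{(0)}$, while the earliest terms are $e_{-m}=y_1^{(0)},\dots,e_0=y_{m+1}^{(0)}$. The initialization $x_{m+1}^{(0)}=d-\sum_{i=1}^{m+1}y_i^{(0)}$ then gives $d-\tilde{x}_0=\sum_{i=1}^{m+1}y_i^{(0)}=e_{-m}+\cdots+e_0$, which is \eqref{eq:9.18.1} at $i=0$. For the inductive step, assuming \eqref{eq:9.18.1} at $i-1$, the recurrence yields
\[
d-\tilde{x}_i=(d-\tilde{x}_{i-1})+e_i-e_{i-(m+1)}=(e_{i-1-m}+\cdots+e_{i-1})+e_i-e_{i-1-m}=e_{i-m}+\cdots+e_i,
\]
so the window of $m+1$ consecutive $e$-terms slides forward by one, dropping the oldest index $e_{i-(m+1)}$ and appending $e_i$. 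This is precisely \eqref{eq:9.18.1} at $i$, closing the induction.

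The main obstacle is not analytic but bookkeeping: one must translate the doubly indexed algorithm variables $(k,i)$ into the single running index $n$ and handle the block boundary correctly, where $i=m+1$ in iteration $k-1$ rolls over to $i=0$ in iteration $k$ via line 2. Pinning down the initial segment $e_{-m},\dots,e_0$ so that the base case aligns with $x_{m+1}^{(0)}=d-\sum_{i=1}^{m+1}y_i^{(0)}$ is the only place where the warmstart values enter, and getting that alignment right is the crux; once the recurrence $\tilde{x}_{n-1}-\tilde{x}_n=e_n-e_{n-(m+1)}$ is in hand, the remaining telescoping is routine.
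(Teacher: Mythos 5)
Your proof is correct and takes essentially the same route as the paper: the paper's one-line proof simply cites lines 4 and 9 of Algorithm \ref{alg:Extended-Dykstra} together with the closed-form identity \eqref{eq:about-z} (adapted to $m+1$ blocks), and verifying that identity amounts to exactly the one-step recurrence $\tilde{x}_{n-1}-\tilde{x}_{n}=e_{n}-e_{n-(m+1)}$ and sliding-window induction you spell out. The only difference is that you make the index bookkeeping and the warmstart base case $d-\tilde{x}_{0}=e_{-m}+\cdots+e_{0}$ explicit, which the paper leaves as ``easily seen.''
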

\begin{proof}
This is easily seen from lines 4 and 9 of Algorithm \ref{alg:Extended-Dykstra}
and the formula for $z_{i}^{(k)}$ in \eqref{eq:about-z}. \end{proof}
\begin{lem}
\label{lem:9.20}In Algorithm \ref{alg:Extended-Dykstra}, $\{\tilde{x}_{i}\}$
is a bounded sequence, and 
\begin{equation}
\begin{array}{c}
\underset{i=1}{\overset{\infty}{\sum}}\|\tilde{x}_{i-1}-\tilde{x}_{i}\|^{2}<\infty.\end{array}\label{eq:9.20.1}
\end{equation}
In particular, 
\begin{equation}
\|\tilde{x}_{i-1}-\tilde{x}_{i}\|\to0\mbox{ as }i\to\infty.\label{eq:9.20.2}
\end{equation}
\end{lem}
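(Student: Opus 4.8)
The plan is to split the lemma into the summability statement \eqref{eq:9.20.1} (from which \eqref{eq:9.20.2} is immediate) and the boundedness of $\{\tilde{x}_i\}$, and to treat these separately. For summability the key observation is that consecutive iterate differences are exactly consecutive dual-variable differences, which are already controlled by Theorem \ref{thm:ppty-extended-Dykstra}(2). For boundedness the plan is to control the end-of-cycle iterates $x_{m+1}^{(k)}$ by a ``$v$-function'' lower bound in the spirit of Proposition \ref{prop:easy-Dykstra-facts}(2)--(3), and then to propagate that bound to every $\tilde{x}_i$ using \eqref{eq:9.20.2}.

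For \eqref{eq:9.20.1} I would apply Lemma \ref{lem:9-18} at the indices $i$ and $i-1$ and subtract, which for every $i\geq1$ gives
\[
\tilde{x}_{i-1}-\tilde{x}_i=(d-\tilde{x}_i)-(d-\tilde{x}_{i-1})=e_i-e_{i-(m+1)}.
\]
Writing $i=(m+1)(k-1)+[i]$ with $k\geq1$, the definition \eqref{eq:def-e} identifies $e_i=y_{[i]}^{(k)}$ and $e_{i-(m+1)}=y_{[i]}^{(k-1)}$, so $\|\tilde{x}_{i-1}-\tilde{x}_i\|^2=\|y_{[i]}^{(k)}-y_{[i]}^{(k-1)}\|^2$. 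Summing over $i\geq1$ re-indexes bijectively as a sum over all pairs $(k,l)$ with $k\geq1$ and $l\in\{1,\dots,m+1\}$, whence
\[
\sum_{i=1}^{\infty}\|\tilde{x}_{i-1}-\tilde{x}_i\|^2=\sum_{k=1}^{\infty}\sum_{l=1}^{m+1}\|y_l^{(k)}-y_l^{(k-1)}\|^2,
\]
which is finite by Theorem \ref{thm:ppty-extended-Dykstra}(2); then \eqref{eq:9.20.2} is just the vanishing of the terms of a convergent series.

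For boundedness I would mimic the algebra in the proof of Proposition \ref{prop:easy-Dykstra-facts}(2), but applied to $h^k$. With $\bar{x}=P_C(d)$, the inclusions $\bar{x}\in C\subseteq C_i$ and $\bar{x}\in C\subseteq H_{m+1}^k$ make every shifted support function $\delta^*(\cdot,C_i-\bar{x})$ and $\delta^*(\cdot,H_{m+1}^k-\bar{x})$ nonnegative, and the same computation yields
\[
\tfrac{1}{2}\|x_{m+1}^{(k)}-\bar{x}\|^2\leq h^k(\tilde{y}^{(k)})-\langle d,\bar{x}\rangle+\tfrac{1}{2}\|\bar{x}\|^2,
\]
where I use $x_{m+1}^{(k)}=d-\sum_{i=1}^{m+1}y_i^{(k)}$, which is Lemma \ref{lem:9-18} at index $(m+1)k$. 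Since Theorem \ref{thm:ppty-extended-Dykstra}(1) makes $\{h^k(\tilde{y}^{(k)})\}_k$ non-increasing, the right-hand side is bounded above by the constant $h^0(\tilde{y}^{(0)})-\langle d,\bar{x}\rangle+\tfrac{1}{2}\|\bar{x}\|^2$, so the end-of-cycle subsequence $\{x_{m+1}^{(k)}\}=\{\tilde{x}_{(m+1)k}\}$ is bounded. To finish, every $\tilde{x}_i$ sits in the same cycle as some $x_{m+1}^{(k)}$, so $\|\tilde{x}_i-x_{m+1}^{(k)}\|\leq\sum\|\tilde{x}_{j-1}-\tilde{x}_j\|\leq(m+1)\sup_j\|\tilde{x}_{j-1}-\tilde{x}_j\|$, and this supremum is finite by \eqref{eq:9.20.2}; hence $\{\tilde{x}_i\}$ is bounded.

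The hard part, and the only place a genuine idea is needed, is this boundedness step: summability and \eqref{eq:9.20.2} by themselves do not bound the partial sums $\tilde{x}_i=\tilde{x}_0-\sum_{j\leq i}(\tilde{x}_{j-1}-\tilde{x}_j)$, and the correction vectors $e_i=y_{[i]}^{(k)}$ need not remain bounded at all (indeed they need not when no dual minimizer exists). What rescues the argument is anchoring at $\bar{x}\in C$ and exploiting the monotone decrease of the dual value $h^k(\tilde{y}^{(k)})$ through the nonnegativity of the shifted support functions: this pins down the end-of-cycle iterates \emph{without} any control on the $e_i$, and the small-step estimate \eqref{eq:9.20.2} then carries the bound to the intermediate iterates.
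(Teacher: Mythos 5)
Your proposal is correct, and its first half is exactly the paper's argument: the paper dismisses \eqref{eq:9.20.1} as ``just a rephrasing of Theorem \ref{thm:ppty-extended-Dykstra}(2)'', and the re-indexing $\tilde{x}_{i-1}-\tilde{x}_{i}=e_{i}-e_{i-(m+1)}=y_{[i]}^{(k)}-y_{[i]}^{(k-1)}$ that you spell out is precisely that rephrasing. For boundedness, both proofs run on the same engine --- anchor at $\bar{x}=P_{C}(d)$, use nonnegativity of the shifted support functions $\delta^{*}(\cdot,C_{i}-\bar{x})$ and $\delta^{*}(\cdot,H_{m+1}^{k}-\bar{x})$, and invoke monotone decrease of the dual value --- but the routes differ. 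The paper introduces, for every index $i$, the quantity $v_{i}=\frac{1}{2}\|\tilde{x}_{i}-P_{C}(d)\|^{2}+\sum_{l=i-m}^{i}\langle e_{l},\tilde{x}_{l}-P_{C}(d)\rangle$, identifies it (via Lemma \ref{lem:9.17}) with an \emph{intermediate} dual value minus a constant, and uses the fact, taken from inside the proof of Theorem \ref{thm:ppty-extended-Dykstra}, that the dual value decreases at every single block update; this bounds all $\tilde{x}_{i}$ in one stroke. You instead use only the stated inequality of Theorem \ref{thm:ppty-extended-Dykstra}(1) to bound the end-of-cycle iterates $x_{m+1}^{(k)}=\tilde{x}_{(m+1)k}$, and then carry the bound across the at most $m+1$ steps of each cycle using the finiteness of $\sup_{j}\|\tilde{x}_{j-1}-\tilde{x}_{j}\|$, which follows from \eqref{eq:9.20.2}. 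Your route is more economical in its dependencies (the statement, not the internals, of Theorem \ref{thm:ppty-extended-Dykstra}, plus a two-line propagation); the paper's finer per-index $v_{i}$ is not wasted effort, however, since its monotonicity and the bound $v_{i}\geq\frac{1}{2}\|\tilde{x}_{i}-P_{C}(d)\|^{2}$ are reused in the proof of Theorem \ref{thm:warmstart-Boyle-Dykstra}, so the paper is setting up machinery it needs later. One small shared caveat: your anchor $h^{0}(\tilde{y}^{(0)})$ is finite only if the warmstart vectors satisfy $\delta^{*}(y_{i}^{(0)},C_{i})<\infty$ (an assumption the paper makes implicitly too, e.g.\ in the proof of Theorem \ref{thm:ppty-extended-Dykstra}(2)); if it fails, anchor instead at $h^{1}(\tilde{y}^{(1)})$, which is finite because $y_{i}^{(1)}\in N_{C_{i}}(x_{i}^{(1)})$ makes every support function value attained, and the argument is unchanged.
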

\begin{proof}
Formula \eqref{eq:9.20.1} is just a rephrasing of Theorem \ref{thm:ppty-extended-Dykstra}(2).
Formula \eqref{eq:9.20.2} follows easily. 

We now show the boundedness of $\{\tilde{x}_{i}\}$. For $i$, let
$k=\lfloor\frac{i}{m+1}\rfloor$. Define $v_{i}$ as 
\begin{equation}
v_{i}:=\frac{1}{2}\|\tilde{x}_{i}-P_{C}(d)\|^{2}+\sum_{l=i-m}^{i}\langle e_{l},\tilde{x}_{l}-P_{C}(d)\rangle.\label{eq:def-v-i}
\end{equation}
Recall the definition of $h^{k}(\cdot)$ in \eqref{eq:hk}. We have
\begin{eqnarray*}
v_{i} & = & \frac{1}{2}\|\tilde{x}_{i}-P_{C}(d)\|^{2}+\sum_{l=i-m}^{i}\langle e_{l},\tilde{x}_{l}-P_{C}(d)\rangle\\
 & = & \frac{1}{2}\|x_{i-k(m+1)}^{(k)}-P_{C}(d)\|^{2}+\sum_{l=1}^{i-k(m+1)}\delta^{*}(y_{l}^{(k)},C_{[l]}-P_{C}(d))\\
 &  & +\delta^{*}(y_{m+1}^{(k-1)},C_{m+1}^{k-1}-P_{C}(d))+\sum_{l=i-k(m+1)+1}^{m}\delta^{*}(y_{l}^{(k-1)},C_{[l]}-P_{C}(d))\\
 & = & h^{k-1}(y_{1}^{(k)},y_{2}^{(k)},\dots,y_{i-k(m+1)}^{(k)},y_{i-k(m+1)+1}^{(k-1)},\dots,y_{m+1}^{(k-1)})\\
 &  & -\langle d,P_{C}(d)\rangle+\frac{1}{2}\|P_{C}(d)\|^{2}.
\end{eqnarray*}
The proof of Theorem \ref{thm:ppty-extended-Dykstra} shows that $v_{i}$
is non-increasing.Since $0\in C_{m+1}^{k-1}-P_{C}(d)$ and $0\in C_{[l]}-P_{C}(d)$,
we have $v_{i}\geq\frac{1}{2}\|\tilde{x}_{i}-P_{C}(d)\|^{2}$ (just
like in Proposition \ref{prop:easy-Dykstra-facts}(3)), which shows
that $\{\tilde{x}_{i}\}$ is a bounded sequence.\end{proof}
\begin{lem}
\label{lem:9.21}In Algorithm \ref{alg:Extended-Dykstra}, for any
$i\in\mathbb{N}$, 
\begin{equation}
\|e_{i}\|\leq\sum_{k=1}^{i}\|\tilde{x}_{k-1}-\tilde{x}_{k}\|+\max_{1\leq l\leq m+1}\|e_{l-(m+1)}\|.\label{eq:9.21.1}
\end{equation}
\end{lem}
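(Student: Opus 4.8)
The plan is to extract a clean two-term recursion for $\|e_i\|$ directly from Lemma \ref{lem:9-18} and then unroll it. Lemma \ref{lem:9-18} gives, for every $i\geq 0$, the identity $d-\tilde{x}_i=e_{i-m}+e_{i-(m-1)}+\cdots+e_{i-1}+e_i$, a sum of the $m+1$ consecutive terms $e_{i-m},\dots,e_i$. First I would write this identity both at $i$ and at $i-1$ and subtract. The two summation windows $\{i-m,\dots,i\}$ and $\{i-m-1,\dots,i-1\}$ overlap in all but their endpoints, so the difference collapses, yielding for every $i\geq 1$ the relation
\[
\tilde{x}_{i-1}-\tilde{x}_i=e_i-e_{i-(m+1)}.
\]
Rearranging produces the recursion $e_i=e_{i-(m+1)}+(\tilde{x}_{i-1}-\tilde{x}_i)$.

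Next I would unroll this recursion in steps of $m+1$. Given $i\geq 1$, let $l$ be the unique integer with $1\leq l\leq m+1$ and $l\equiv i\pmod{m+1}$; this is the smallest positive index in the progression $i,\,i-(m+1),\,i-2(m+1),\dots$, and the next index $l-(m+1)$ lies in $\{-m,\dots,0\}$, i.e.\ among the $m+1$ starting terms $e_{-m},\dots,e_0$. Applying the recursion successively at the indices $j=i,\,i-(m+1),\dots,l$ telescopes the $e$-terms down to the single base term $e_{l-(m+1)}$ and accumulates the increments: with $S=\{l,\,l+(m+1),\,\dots,\,i\}$,
\[
e_i=e_{l-(m+1)}+\sum_{j\in S}\big(\tilde{x}_{j-1}-\tilde{x}_j\big).
\]

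Finally I would apply the triangle inequality. The base term satisfies $\|e_{l-(m+1)}\|\leq\max_{1\leq l'\leq m+1}\|e_{l'-(m+1)}\|$, and the increment indices $S$ form a subset of $\{1,\dots,i\}$, so since each $\|\tilde{x}_{j-1}-\tilde{x}_j\|$ is nonnegative the partial sum is dominated by the full sum $\sum_{k=1}^{i}\|\tilde{x}_{k-1}-\tilde{x}_k\|$. Combining the two bounds gives exactly \eqref{eq:9.21.1}. The only genuinely substantive step is the first one: recognizing that differencing the two instances of Lemma \ref{lem:9-18} produces the clean recursion $e_i=e_{i-(m+1)}+(\tilde{x}_{i-1}-\tilde{x}_i)$. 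Everything afterward is index bookkeeping and a triangle inequality, so I anticipate no real obstacle beyond keeping careful track of the residue class of $i$ modulo $m+1$ so that the unrolling terminates at the correct starting term.
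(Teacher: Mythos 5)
Your proposal is correct and is essentially the paper's own argument: the paper proves \eqref{eq:9.21.1} by induction on $i$ using precisely the recursion $e_{i+1}=\tilde{x}_{i}-\tilde{x}_{i+1}+e_{i+1-(m+1)}$ together with the triangle inequality, which is the inductive form of your explicit unrolling. The only cosmetic difference is that the paper reads this recursion directly off lines 4--6 and 9--11 of Algorithm \ref{alg:Extended-Dykstra} (i.e., $y_{i}^{(k)}=x_{i-1}^{(k)}-x_{i}^{(k)}+y_{i}^{(k-1)}$), whereas you re-derive it by differencing two instances of Lemma \ref{lem:9-18}; both routes are immediate and equivalent.
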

\begin{proof}
The proof is adjusted from \cite[Lemma 9.21]{Deustch01}. We induct
on $i$. It is clear to see that \eqref{eq:9.21.1} holds for all
$i\in\{-m,\dots,0\}$. Suppose \eqref{eq:9.21.1} holds for all $r\leq i$.
Let $M_{1}=\max_{1\leq l\leq m+1}\|e_{l-(m+1)}\|$. Then 
\[
\begin{array}{rcl}
\|e_{i+1}\| & = & \|\tilde{x}_{i}-\tilde{x}_{i+1}+e_{i+1-(m+1)}\|\leq\|\tilde{x}_{i}-\tilde{x}_{i+1}\|+\|e_{i+1-(m+1)}\|\\
 & \leq & \|\tilde{x}_{i}-\tilde{x}_{i+1}\|+\underset{k=1}{\overset{i+1-(m+1)}{\sum}}\|\tilde{x}_{k-1}-\tilde{x}_{k}\|+M_{1}\leq\underset{k=1}{\overset{i+1}{\sum}}\|\tilde{x}_{k-1}-\tilde{x}_{k}\|+M_{1},
\end{array}
\]
which implies that \eqref{eq:9.21.1} holds for $r=i+1$.\end{proof}
\begin{lem}
\label{lem:9-22}In Algorithm \ref{alg:Extended-Dykstra}, 
\begin{equation}
\begin{array}{c}
\liminf_{i}\underset{k=i-m}{\overset{i}{\sum}}|\left\langle \tilde{x}_{k}-\tilde{x}_{i},e_{k}\right\rangle |=0.\end{array}\label{eq:9.22.1}
\end{equation}
\end{lem}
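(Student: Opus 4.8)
The plan is to control each summand by Cauchy--Schwarz and then exploit the square-summability of the consecutive differences $\|\tilde{x}_{j-1}-\tilde{x}_j\|$ from Lemma \ref{lem:9.20}. Writing $b_i:=\sum_{k=i-m}^{i}|\langle\tilde{x}_k-\tilde{x}_i,e_k\rangle|$ for the quantity of interest, Cauchy--Schwarz gives $|\langle\tilde{x}_k-\tilde{x}_i,e_k\rangle|\le\|\tilde{x}_k-\tilde{x}_i\|\,\|e_k\|$, so I would estimate the two factors separately. For the first factor, telescoping over consecutive indices shows that for $i-m\le k\le i$ one has $\|\tilde{x}_k-\tilde{x}_i\|\le\sum_{j=k+1}^{i}\|\tilde{x}_{j-1}-\tilde{x}_j\|\le\sigma_i$, where $\sigma_i:=\sum_{j=i-m+1}^{i}\|\tilde{x}_{j-1}-\tilde{x}_j\|$ is a window of at most $m$ consecutive differences.

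For the second factor I would invoke Lemma \ref{lem:9.21}: for $k\le i$ we have $\|e_k\|\le\sum_{l=1}^{i}\|\tilde{x}_{l-1}-\tilde{x}_l\|+M_1$, with $M_1:=\max_{1\le l\le m+1}\|e_{l-(m+1)}\|$, and then apply Cauchy--Schwarz to the partial sum to obtain $\|e_k\|\le A\sqrt{i}+M_1$, where $A:=\left(\sum_{l=1}^{\infty}\|\tilde{x}_{l-1}-\tilde{x}_l\|^2\right)^{1/2}$ is finite by Lemma \ref{lem:9.20}. Combining the two estimates over the $m+1$ terms of $b_i$ yields $b_i\le(m+1)\,\sigma_i\,(A\sqrt{i}+M_1)$. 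The point to register here is that $\|e_k\|$ may genuinely grow, but no faster than $O(\sqrt{i})$, so one cannot simply discard it.

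The crux is then to show $\liminf_i\sqrt{i}\,\sigma_i=0$. First I would record that $\sum_i\sigma_i^2<\infty$: by Cauchy--Schwarz $\sigma_i^2\le m\sum_{j=i-m+1}^{i}\|\tilde{x}_{j-1}-\tilde{x}_j\|^2$, and since each difference $\|\tilde{x}_{j-1}-\tilde{x}_j\|^2$ lies in at most $m$ of these windows, summing over $i$ gives $\sum_i\sigma_i^2\le m^2A^2<\infty$. If $\sqrt{i}\,\sigma_i$ failed to have liminf zero, then $\sigma_i\ge c/\sqrt{i}$ for some $c>0$ and all large $i$, forcing $\sigma_i^2\ge c^2/i$ and hence $\sum_i\sigma_i^2=\infty$ via the harmonic series, a contradiction. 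Therefore there is a subsequence along which $\sqrt{i}\,\sigma_i\to0$ (and in particular $\sigma_i\to0$), along which $b_i\le(m+1)(A\,\sqrt{i}\,\sigma_i+M_1\sigma_i)\to0$, establishing \eqref{eq:9.22.1}. I expect this final harmonic-series step to be the main obstacle: precisely because $\|e_k\|$ is only $O(\sqrt{i})$ and not bounded, the conclusion cannot follow from $\sigma_i\to0$ alone, and it is essential to pass to the subsequence furnished by the square-summability of $\sigma_i$.
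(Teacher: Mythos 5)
Your proof is correct, and its skeleton is the same as the paper's: both begin with the Cauchy--Schwarz bound $|\langle\tilde{x}_k-\tilde{x}_i,e_k\rangle|\le\|\tilde{x}_k-\tilde{x}_i\|\,\|e_k\|$, control $\|e_k\|$ via Lemma \ref{lem:9.21}, control $\|\tilde{x}_k-\tilde{x}_i\|$ by telescoping over the window of at most $m$ consecutive differences, and then invoke the square-summability from Lemma \ref{lem:9.20}. The only point of divergence is the final step. Writing $a_j=\|\tilde{x}_{j-1}-\tilde{x}_j\|$ and $\sigma_i=\sum_{j=i-m+1}^{i}a_j$, the paper reduces the claim to the arithmetic fact that $\liminf_i\bigl(\sum_{j=1}^{i}a_j\bigr)\sigma_i=0$ whenever $\sum_j a_j^2<\infty$, and then cites \cite[Lemma 9.22]{Deustch01} for a proof of that fact; you instead prove it yourself: by Cauchy--Schwarz, $\sum_{j=1}^{i}a_j\le A\sqrt{i}$ with $A=\bigl(\sum_j a_j^2\bigr)^{1/2}$, the windowed sums satisfy $\sum_i\sigma_i^2\le m^2A^2<\infty$ (each $a_j^2$ appears in at most $m$ windows), and if $\liminf_i\sqrt{i}\,\sigma_i$ were positive then $\sigma_i^2\ge c^2/i$ for some $c>0$ and all large $i$, forcing $\sum_i\sigma_i^2=\infty$ by divergence of the harmonic series --- a contradiction. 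Your handling of the $M_1$ term (keeping it as $(m+1)M_1\sigma_i\to0$) is an inessential variant of the paper's separate term (2). So the substance and estimates coincide; what your route buys is a self-contained argument in place of the external citation, at the cost of a few extra lines, and it makes explicit the point the paper leaves implicit: $\|e_k\|$ can grow, but only at rate $O(\sqrt{i})$, which is exactly slow enough for the square-summability of the windows to win along a subsequence.
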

\begin{proof}
The proof needs to be adjusted from \cite[Lemma 9.22]{Deustch01}.
Let $M_{1}=\max_{1\leq l\leq m+1}\|e_{l-(m+1)}\|$. Using Schwarz's
inequality and Lemma \ref{lem:9.21}, we get 
\[
\begin{array}{cl}
 & \underset{k=i-m}{\overset{i}{\sum}}|\left\langle \tilde{x}_{k}-\tilde{x}_{i},e_{k}\right\rangle |\\
\leq & \underset{k=i-m}{\overset{i}{\sum}}\|e_{k}\|\|\tilde{x}_{k}-\tilde{x}_{i}\|\\
\leq & \underset{k=i-m}{\overset{i}{\sum}}\left[\left(M_{1}+\underset{j=1}{\overset{k}{\sum}}\|\tilde{x}_{j-1}-\tilde{x}_{j}\|\right)\|\tilde{x}_{k}-\tilde{x}_{i}\|\right]\\
\leq & \underset{k=i-m}{\overset{i}{\sum}}\left[\left(\underset{j=1}{\overset{k}{\sum}}\|\tilde{x}_{j-1}-\tilde{x}_{j}\|\right)\left(\underset{l=k+1}{\overset{i}{\sum}}\|\tilde{x}_{l-1}-\tilde{x}_{l}\|\right)\right]+M_{1}\underset{k=i-m}{\overset{i}{\sum}}\|\tilde{x}_{k}-\tilde{x}_{i}\|\\
\leq & \underbrace{(m+1)\left(\underset{j=1}{\overset{i}{\sum}}\|\tilde{x}_{j-1}-\tilde{x}_{j}\|\right)\left(\underset{l=i-(m-1)}{\overset{i}{\sum}}\|\tilde{x}_{l-1}-\tilde{x}_{l}\|\right)}_{(1)}+\underbrace{M_{1}\underset{k=i-m}{\overset{i}{\sum}}\|\tilde{x}_{k}-\tilde{x}_{i}\|}_{(2)}.
\end{array}
\]
Term $(2)$ converges to zero by Lemma \ref{lem:9.20}. Let $a_{i}=\|x_{i-1}-x_{i}\|$.
To show our result, it suffices to show that 
\[
\begin{array}{c}
\liminf_{i}\left[\left(\underset{j=1}{\overset{i}{\sum}}a_{j}\right)\left(\underset{l=i-(m-1)}{\overset{i}{\sum}}a_{l}\right)\right]=0\end{array}
\]
 given that $\sum_{j=1}^{\infty}a_{j}^{2}$ is finite. We refer the
reader to the proof in \cite[Lemma 9.22]{Deustch01} for the proof
of this fact.\end{proof}
\begin{lem}
\label{lem:9-23}In Algorithm \ref{alg:Extended-Dykstra}, there exists
a subsequence $\{\tilde{x}_{i_{j}}\}$ of $\{\tilde{x}_{i}\}$ such
that 
\begin{equation}
\limsup_{j}\langle y-\tilde{x}_{i_{j}},d-\tilde{x}_{i_{j}}\rangle\leq0\mbox{ for each }y\in C,\mbox{ and }\label{eq:9.23.1}
\end{equation}
\begin{equation}
\begin{array}{c}
\lim_{j}\underset{k=i_{j}-m}{\overset{i_{j}}{\sum}}|\langle\tilde{x}_{k}-\tilde{x}_{i_{j}},e_{k}\rangle|=0.\end{array}\label{eq:9.23.2}
\end{equation}
\end{lem}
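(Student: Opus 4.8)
The plan is to extract the subsequence directly from Lemma \ref{lem:9-22} and then verify both conclusions along it. Since Lemma \ref{lem:9-22} gives $\liminf_i\sum_{k=i-m}^{i}|\langle\tilde{x}_k-\tilde{x}_i,e_k\rangle|=0$, there is a subsequence $\{i_j\}$ along which this sum tends to $0$; this is precisely \eqref{eq:9.23.2}, so the entire task reduces to verifying \eqref{eq:9.23.1} for the same subsequence.

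For \eqref{eq:9.23.1}, I would first use the telescoping identity of Lemma \ref{lem:9-18} to write $d-\tilde{x}_i=\sum_{k=i-m}^{i}e_k$, giving
\[
\langle y-\tilde{x}_i,d-\tilde{x}_i\rangle=\sum_{k=i-m}^{i}\langle y-\tilde{x}_i,e_k\rangle.
\]
The decisive manipulation is to split each summand as $\langle y-\tilde{x}_i,e_k\rangle=\langle y-\tilde{x}_k,e_k\rangle+\langle\tilde{x}_k-\tilde{x}_i,e_k\rangle$. The window $\{i-m,\dots,i\}$ consists of $m+1$ consecutive integers, so each residue $[k]\in\{1,\dots,m+1\}$ occurs exactly once; consequently Lemma \ref{lem:9.17} applies to every term. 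Because $C\subset C_{[k]}$ when $[k]\le m$, and $C\subset C_{m+1}^{k/(m+1)}$ when $[k]=m+1$ (guaranteed by line 8 of Algorithm \ref{alg:Extended-Dykstra}), the inequalities \eqref{eq:9.17.1} and \eqref{eq:9.17.2} yield $\langle\tilde{x}_k-y,e_k\rangle\ge0$, hence $\langle y-\tilde{x}_k,e_k\rangle\le0$, for all $y\in C$.

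Combining these observations, for every $y\in C$,
\[
\langle y-\tilde{x}_i,d-\tilde{x}_i\rangle\le\sum_{k=i-m}^{i}\langle\tilde{x}_k-\tilde{x}_i,e_k\rangle\le\sum_{k=i-m}^{i}|\langle\tilde{x}_k-\tilde{x}_i,e_k\rangle|.
\]
Passing to the subsequence $\{i_j\}$ and invoking \eqref{eq:9.23.2} forces the right-hand side to $0$, which establishes $\limsup_j\langle y-\tilde{x}_{i_j},d-\tilde{x}_{i_j}\rangle\le0$, i.e.\ \eqref{eq:9.23.1}.

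I expect no serious obstacle, since this is a faithful adaptation of the corresponding step in the Boyle--Dykstra theorem. The only delicate point is the bookkeeping that confirms every index in the sliding window $\{i-m,\dots,i\}$ corresponds to a distinct set among $C_1,\dots,C_m$ and the current supporting set $C_{m+1}^{(\cdot)}$, so that Lemma \ref{lem:9.17} can be applied uniformly across the whole sum. The introduction of the extra block $C_{m+1}^{k}$ is harmless here precisely because it, too, contains $C$, so the sign $\langle y-\tilde{x}_k,e_k\rangle\le0$ survives for that index as well.
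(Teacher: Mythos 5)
Your proposal is correct and follows essentially the same route as the paper's proof: both use Lemma \ref{lem:9-18} to expand $d-\tilde{x}_{i}$, split $\langle y-\tilde{x}_{i},e_{k}\rangle$ into $\langle y-\tilde{x}_{k},e_{k}\rangle+\langle\tilde{x}_{k}-\tilde{x}_{i},e_{k}\rangle$, kill the first sum by Lemma \ref{lem:9.17}, and extract the subsequence from the liminf in Lemma \ref{lem:9-22}. The only difference is cosmetic ordering (you pick the subsequence first, the paper derives the inequality first), so there is nothing further to add.
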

\begin{proof}
The proof is almost exactly the same as \cite[Lemma 9.23]{Deustch01}.
Using Lemma \ref{lem:9-18}, we have for all $y\in C$, $i\geq m$
that 
\[
\begin{array}{rcl}
\langle y-\tilde{x}_{i},d-\tilde{x}_{i}\rangle & = & \langle y-\tilde{x}_{i},e_{i-m}+e_{i-m+1}+\cdots+e_{i}\rangle\\
 & = & \underset{k=i-m}{\overset{i}{\sum}}\langle y-\tilde{x}_{i},e_{k}\rangle\\
 & = & \underset{k=i-m}{\overset{i}{\sum}}\langle y-\tilde{x}_{k},e_{k}\rangle+\underset{k=i-m}{\overset{i}{\sum}}\langle\tilde{x}_{k}-\tilde{x}_{i},e_{k}\rangle.
\end{array}
\]
By Lemma \ref{lem:9.17}, the first sum is no more than $0$. Hence
\begin{equation}
\begin{array}{c}
\langle y-\tilde{x}_{i},d-\tilde{x}_{i}\rangle\leq\underset{k=i-m}{\overset{i}{\sum}}\langle\tilde{x}_{k}-\tilde{x}_{i},e_{k}\rangle.\end{array}\label{eq:9-23-4}
\end{equation}
By Lemma \ref{lem:9-22}, we deduce that there is a subsequence $\{i_{j}\}_{j}$
such that \eqref{eq:9.23.2} holds. Note that the right hand side
of \eqref{eq:9-23-4} does not depend on $y$. In view of \eqref{eq:9-23-4},
it follows that \eqref{eq:9.23.1} also holds.\end{proof}
\begin{thm}
\label{thm:warmstart-Boyle-Dykstra}(Warmstart Boyle-Dykstra Theorem)
Consider Algorithm \ref{alg:Extended-Dykstra}. Define the sequence
$\{\tilde{x}_{n}\}$ as in Step 2 of Algorithm \ref{alg:Extended-Dykstra}
and \eqref{eq:def-x}. Then 
\[
\lim_{i}\|\tilde{x}_{i}-P_{C}(d)\|=0.
\]
\end{thm}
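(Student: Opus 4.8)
The plan is to run the classical Boyle--Dykstra endgame, but carrying the extra index $[i]=m+1$ and the warmstart through, and upgrading it to the (possibly infinite-dimensional) Hilbert space setting via weak compactness. Write $\bar x := P_C(d)$. The engine of the proof is the quantity $v_i$ from \eqref{eq:def-v-i}: Lemma~\ref{lem:9.20} and its proof record that $v_i$ is non-increasing and that $v_i \ge \tfrac12\|\tilde x_i - \bar x\|^2 \ge 0$, so $v_i$ decreases to some limit $v^\ast \ge 0$. Since $\tfrac12\|\tilde x_i-\bar x\|^2 \le v_i$ holds for \emph{every} $i$, the theorem follows for the whole sequence the moment I show $v^\ast = 0$; everything else is devoted to that single equality.

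First I would pass to the subsequence $\{\tilde x_{i_j}\}$ furnished by Lemma~\ref{lem:9-23}. Using Lemma~\ref{lem:9-18} to substitute $\sum_{k=i_j-m}^{i_j} e_k = d - \tilde x_{i_j}$ into the definition of $v_{i_j}$, and splitting each $\langle e_k, \tilde x_k - \bar x\rangle$ as $\langle e_k, \tilde x_k - \tilde x_{i_j}\rangle + \langle e_k, \tilde x_{i_j}-\bar x\rangle$, I obtain the identity
\[
v_{i_j} = \tfrac12\|\tilde x_{i_j}-\bar x\|^2 + \sum_{k=i_j-m}^{i_j}\langle e_k,\tilde x_k - \tilde x_{i_j}\rangle + \langle d-\tilde x_{i_j},\tilde x_{i_j}-\bar x\rangle ,
\]
in which the middle sum tends to $0$ by \eqref{eq:9.23.2} and the last term has nonnegative lower limit by \eqref{eq:9.23.1} with $y=\bar x$. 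Because $\{\tilde x_i\}$ is bounded (Lemma~\ref{lem:9.20}), I extract a further subsequence along which $\tilde x_{i_{j_l}} \rightharpoonup x^\ast$ weakly. To see $x^\ast \in C$, I fix $p \in\{1,\dots,m\}$: for each large index there is a neighbouring index $i'$ with $[i']=p$, hence $\tilde x_{i'}\in C_p$, while $\|\tilde x_{i'}-\tilde x_{i_{j_l}}\| \to 0$ by \eqref{eq:9.20.2} (a sum of at most $m$ vanishing consecutive gaps). Thus $x^\ast$ is a weak limit of points of $C_p$, and weak closedness of the convex set $C_p$ gives $x^\ast\in C_p$; intersecting over $p$ yields $x^\ast\in C$.

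The decisive computation is to expand the quantity in \eqref{eq:9.23.1} around $x^\ast$. Writing $w_l := \tilde x_{i_{j_l}}$ and regrouping,
\[
\langle y - w_l, d - w_l\rangle = \langle y-x^\ast, d-x^\ast\rangle + \|x^\ast - w_l\|^2 + \langle y-x^\ast, x^\ast-w_l\rangle + \langle x^\ast - w_l, d-x^\ast\rangle ,
\]
where the last two inner products vanish in the limit by weak convergence. Taking $\limsup_l$ and invoking \eqref{eq:9.23.1} gives $\langle y-x^\ast, d-x^\ast\rangle + \limsup_l\|x^\ast-w_l\|^2 \le 0$ for every $y\in C$; dropping the nonnegative square yields $\langle y - x^\ast, d-x^\ast\rangle \le 0$ for all $y\in C$, which together with $x^\ast\in C$ characterises $x^\ast = \bar x$. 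Feeding the now-admissible choice $y = x^\ast = \bar x$ back in forces $\limsup_l\|x^\ast - w_l\|^2 \le 0$, i.e.\ $w_l \to \bar x$ in norm. Since every weakly convergent subsequence of $\{\tilde x_{i_j}\}$ thereby converges strongly to the \emph{same} limit $\bar x$, a standard subsequence argument upgrades this to $\tilde x_{i_j} \to \bar x$ in norm. Returning to the displayed identity for $v_{i_j}$, the first term now tends to $0$, the middle sum already did, and $\langle d-\tilde x_{i_j}, \tilde x_{i_j}-\bar x\rangle \to 0$ as a product of a convergent factor with one tending to $0$; hence $v^\ast = \lim_j v_{i_j} = 0$, and the monotone bound $\tfrac12\|\tilde x_i - \bar x\|^2 \le v_i \to 0$ finishes the whole sequence.

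I expect the main obstacle to be precisely the two places where finite dimensionality is usually taken for granted: promoting the bounded sequence to a weakly convergent subsequence and, more delicately, certifying that its weak limit lies in $C$. The subtle point is that the intermediate iterates $\tilde x_i$ with $[i]=m+1$ sit only in the outer approximations $C_{m+1}^k\supset C$, not in $C$, so the membership $x^\ast\in C$ cannot be read off those iterates and must instead be recovered from the genuine projections onto $C_1,\dots,C_m$ together with the vanishing of consecutive gaps in \eqref{eq:9.20.2}. The pleasant feature that makes weak convergence suffice is that the single expansion around $x^\ast$ simultaneously delivers the variational inequality (so $x^\ast=\bar x$) and, upon setting $y=x^\ast$, the strong convergence of the subsequence.
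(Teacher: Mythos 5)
Your proof is correct and follows essentially the same route as the paper's: the subsequence from Lemma \ref{lem:9-23}, weak compactness of the bounded sequence, membership of the weak limit in $C$ via the shifted iterates and \eqref{eq:9.20.2}, identification of that limit as $P_{C}(d)$, strong convergence of the subsequence, and finally the monotone quantity $v_{i}$ of \eqref{eq:def-v-i} to upgrade from subsequential to full-sequence convergence. The only local difference is that your single expansion of $\langle y-w_{l},d-w_{l}\rangle$ around $x^{*}$ delivers the variational inequality and the strong subsequential convergence in one stroke, where the paper instead invokes weak lower semicontinuity of the norm and then forces equality in the chain \eqref{eq:9.24.6p}; this is a tidy streamlining of the same step, not a different argument.
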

\begin{proof}
The proof of this result is mostly the same as \cite[Lemma 9.23]{Deustch01}.
By Lemma \ref{lem:9-23}, there exists a subsequence $\{\tilde{x}_{i_{j}}\}$
such that 
\begin{equation}
\limsup_{j}\langle y-\tilde{x}_{i_{j}},d-\tilde{x}_{i_{j}}\rangle\leq0\mbox{ for each }\ensuremath{y\in C}.\label{eq:9.24.2}
\end{equation}
Since $\{\tilde{x}_{i}\}$ is bounded by Lemma \ref{lem:9.20}, it
follows by \cite[Theorem 9.12]{Deustch01} (by passing to a further
subsequence if necessary), that there is a $y_{0}\in X$ such that
\begin{equation}
\tilde{x}_{i_{j}}\xrightarrow{w}y_{0},\label{eq:9.24.3}
\end{equation}
and 
\begin{equation}
\lim_{j}\|\tilde{x}_{i_{j}}\|\mbox{ exists.}\label{eq:9.24.4}
\end{equation}
By another property of Hilbert spaces (\cite[Theorem 9.13]{Deustch01}),
\begin{equation}
\|y_{0}\|\leq\liminf_{j}\|\tilde{x}_{i_{j}}\|=\lim_{j}\|\tilde{x}_{i_{j}}\|.\label{eq:9.24.5}
\end{equation}
Since $[i]$ takes on only $m+1$ possibilities, an infinite number
of the $i_{j}$'s must be of the same value. If this value is in $\{1,\dots,m\}$,
say $i_{0}$, then an infinite number of the $\tilde{x}_{i_{j}}$'s
lie in $C_{i_{0}}$. Since $C_{i_{0}}$ is closed and convex, it is
weakly closed by \cite[Theorem 9.16]{Deustch01}, and hence $y_{0}\in C_{i_{0}}$.
By \eqref{eq:9.20.2}, $\tilde{x}_{i}-\tilde{x}_{i-1}\to0$. By a
repeated application of this fact, we see that all the sequences $\{\tilde{x}_{i_{j}+1}\}$,
$\{\tilde{x}_{i_{j}+2}\}$, $\dots$ converge weakly to $y_{0}$,
and hence $y_{0}\in C_{j}$ for every $j$. That is, 
\[
y_{0}\in C.
\]
For any $y\in C$, \eqref{eq:9.24.5} and \eqref{eq:9.24.2} imply
that 
\begin{eqnarray}
\langle y-y_{0},d-y_{0}\rangle & = & \langle y,d\rangle-\langle y,y_{0}\rangle-\langle y_{0},d\rangle+\|y_{0}\|^{2}\label{eq:9.24.6p}\\
 & \leq & \lim_{j}[\langle y,d\rangle-\langle y,\tilde{x}_{i_{j}}\rangle-\langle x_{i_{j}},d\rangle+\|\tilde{x}_{i_{j}}\|^{2}]\nonumber \\
 & = & \lim_{j}\langle y-\tilde{x}_{i_{j}},d-\tilde{x}_{i_{j}}\rangle\leq0.\nonumber 
\end{eqnarray}
Hence $y_{0}=P_{C}(d).$ Moreover, putting $y=y_{0}$ in \eqref{eq:9.24.6p},
we get equality in the chain of inequalities, and hence 
\begin{equation}
\lim_{j}\|\tilde{x}_{i_{j}}\|^{2}=\|y_{0}\|^{2}\label{eq:9.24.8}
\end{equation}
and 
\[
\lim_{j}\langle y_{0}-\tilde{x}_{i_{j}},d-\tilde{x}_{i_{j}}\rangle=0.
\]
By \eqref{eq:9.24.3} and \eqref{eq:9.24.8}, it follows from \cite[Theorem 9.10(2)]{Deustch01}
that $\|\tilde{x}_{i_{j}}-y_{0}\|\to0$. Hence 
\begin{equation}
\|\tilde{x}_{i_{j}}-P_{C}(d)\|=\|\tilde{x}_{i_{j}}-y_{0}\|\to0.\label{eq:9.24.11}
\end{equation}
We now show an alternative strategy different from what was presented
in \cite[Theorem 9.24]{Deustch01}. Recall the definition of $v_{i}$
in \eqref{eq:def-v-i}. We have 
\[
\begin{array}{rcl}
v_{i} & = & \frac{1}{2}\|\tilde{x}_{i}-P_{C}(d)\|^{2}+\underset{l=i-m}{\overset{i}{\sum}}\langle e_{l},\tilde{x}_{l}-P_{C}(d)\rangle\\
 & = & \frac{1}{2}\|\tilde{x}_{i}-P_{C}(d)\|^{2}+\langle d-\tilde{x}_{i},\tilde{x}_{i}-P_{C}(d)\rangle+\underset{l=i-m}{\overset{i}{\sum}}\langle e_{l},\tilde{x}_{l}-\tilde{x}_{i}\rangle\\
 & \leq & \frac{1}{2}\|\tilde{x}_{i}-P_{C}(d)\|^{2}+\|d-\tilde{x}_{i}\|\|\tilde{x}_{i}-P_{C}(d)\|+\underset{l=i-m}{\overset{i}{\sum}}\langle e_{l},\tilde{x}_{l}-\tilde{x}_{i}\rangle.
\end{array}
\]
 From \eqref{eq:9.24.11} and \eqref{eq:9.23.2}, and the fact that
$v_{i}\geq0$, we have $\liminf_{i\to\infty}v_{i}=\lim_{j\to\infty}v_{i_{j}}=0$.
Since $\{v_{i}\}$ is nonincreasing, we have $\lim_{i\to\infty}v_{i}=0$.
Moreover, recall back in the proof of Lemma \ref{lem:9.20} that $v_{i}\geq\frac{1}{2}\|\tilde{x}_{i}-y_{0}\|^{2}.$
These facts combine to show us that $\tilde{x}_{i}\to y_{0}$, which
is what we seek.
\end{proof}
\bibliographystyle{amsalpha}
\bibliography{../refs}

\end{document}